\providecommand\grad{}
\renewcommand*\grad{\mathop{}\mathopen{\nabla}}
\providecommand\fun{}
\renewcommand*{\fun}[1]{\mathop{{}#1}\nolimits}
\providecommand\diff{}
\renewcommand*{\diff}{\mathop{{}d}\mathopen{}}
\newcommand\mytheorems{
	\usepackage{amsthm}
	\theoremstyle{plain}
	\newtheorem{theorem}{Theorem}
	\newtheorem{lemma}{Lemma}
	\newtheorem{proposition}{Proposition}
	\newtheorem{assumption}{Assumption}
	\newtheorem{corollary}{Corollary}
}
\newcommand{\myalgorithmtheorembegin}[1]{\begin{algorithm}[#1] \leavevmode}
\newcommand\myalgorithmtheoremend{\end{algorithm}}
\newcommand{\myalgorithmfloatbegin}[1]{\begin{algorithm} \caption{#1}}
\newcommand\myalgorithmfloatend{\end{algorithm}}
\date{\today}
\newcommand*\titlefull{On the asymptotic behavior of a higher-order extrapolation primal--dual interior-point method for nonlinear programming}
\newcommand*\titlerunninghead{Extrapolation primal--dual interior-point method}
\newcommand*\AMSyear{2020}
\newcommand*\AMSlongvalue{Primary 65K05, 90C30, 90C51.}
\newcommand*\AMSvalue{65K05, 90C30, 90C51}
\newcommand*\abstracttext{A trajectory-following primal--dual interior-point method solves nonlinear optimization problems with inequality and equality constraints by approximately finding points satisfying perturbed Karush--Kuhn--Tucker optimality conditions for a decreasing order of perturbation controlled by the barrier parameter. Under some conditions, there is a unique local correspondence between small residuals of the optimality conditions and points yielding that residual, and the solution on the barrier trajectory for the next barrier parameter can be approximated using an approximate solution for the current parameter. A framework using higher-order derivative information of the correspondence is analyzed in which an extrapolation step to the trajectory is first taken after each decrease of the barrier parameter upon reaching a sufficient approximation. It suffices asymptotically to only take extrapolation steps for convergence at the rate the barrier parameter decreases with when using derivative information of high enough order. Numerical results for quadratic programming problems are presented using extrapolation as accelerator.}
\newcommand*\keywordstext{interior-point methods, extrapolation methods, higher-order methods, local convergence, nonlinear programming}
	\title{\titlefull}
	\markboth{\titlerunninghead}{}
	\author{Pim Heeman\thanks{\raggedright \footAF} \and Anders Forsgren\addtocounter{footnote}{-1}\footnotemark}
	\def\footAF{Division of Numerical Analysis, Optimization and Systems Theory, Department of Mathematics, KTH Royal Institute of Technology, SE-100\;44 Stockholm, Sweden ({\tt pimh@kth.se;https://orcid.org/0009-0006-1948-6831, andersf@kth.se;https://orcid.org/0000-0002-6252-7815}).}
	\newcommand\mymaketitle{
		\maketitle
		\thispagestyle{empty}
		\ifdefined\abstracttext
		\begin{abstract}
			\abstracttext
		\end{abstract}
		\fi
	}
\def\expandafter\mymaketitle\expandafter{\mymaketitle \begin{keywords} \keywordstext \end{keywords} \begin{AMS} \AMSvalue \end{AMS}}
		\newtheorem{algorithm}{Algorithm}
	\newenvironment{myalgorithm}[1]{\myalgorithmtheorembegin{#1}}{\myalgorithmtheoremend}
\title\expandafter\expandafter\expandafter[\expandafter\titlerunninghead\expandafter]\expandafter{\titlefull}
	\keywords{\expandafter\MakeUppercase \keywordstext}
	\author{Pim Heeman}
	\address{Division of Numerical Analysis, Optimization and Systems Theory, Department of Mathematics, KTH Royal Institute of Technology, SE-100 44 Stockholm, Sweden}
	\email{pimh@kth.se}
	\author{Anders Forsgren}
	\address{Division of Numerical Analysis, Optimization and Systems Theory, Department of Mathematics, KTH Royal Institute of Technology, SE-100 44 Stockholm, Sweden}
	\email{andersf@kth.se}
	\newcommand\mymaketitle{
		\ifdefined\abstracttext
		\begin{abstract}
			\abstracttext
		\end{abstract}
		\fi
		\maketitle
	}
\else \subjclass[\AMSyear]{\AMSlongvalue} \fi
	\newtheorem{algorithm}{Algorithm}
	\newenvironment{myalgorithm}[1]{\myalgorithmtheorembegin{#1}}{\myalgorithmtheoremend}
\begin{document}
\mymaketitle

\section{Introduction}

In this work, the asymptotic behavior of a primal--dual interior-point method framework that uses higher-order derivative information will be studied. Within the scope are general nonlinear continuous optimization problems with inequality and equality constraints of the form
\begin{align}
	\begin{split}
	\operatorname*{minimize}_{x \in \mathbb{R}^n}\quad &\mathopen{}\fun{f}(x) \\
	{\operatorfont subject\ to} \quad &\mathopen{}\fun{c_\mathcal{I}}(x) \geq 0, \\
	&\mathopen{}\fun{c_\mathcal{E}}(x) = 0,
	\end{split} \label{eqn:nlp}
\end{align}
with $\fun{f} \colon \mathbb{R}^n \to \mathbb{R}$ and $\fun{c_\mathcal{I}}(x)$~and~$\fun{c_\mathcal{E}}(x)$ referring to vectors of length $m_\mathcal{I}$~and~$m_\mathcal{E}$ respectively, where the $i$th element of the vector $\fun{c}(x) \triangleq \begin{pmatrix} \fun{c_\mathcal{I}}(x)^T & \fun{c_\mathcal{E}}(x)^T \end{pmatrix}^T$ of length~$m$ is the function~$\fun{c_i} \colon \mathbb{R}^n \to \mathbb{R}$ evaluated at~$x$. We let~$d$ be the smallest number of times each of the $(m + 1)$~functions $f$~and~$\fun{c_i}$ is continuously differentiable, and assume $d \geq 3$, i.e., that each function is at least thrice continuously differentiable.

\subsection{Notation}

We denote by~$[\cdot]_i$ the $i$th row of the matrix this notation is applied to and by~$[\cdot]_S$ the rows of the matrix indexed by index set~$S$ stacked on top of each other. As shorthand notation, we write $w \triangleq (x, \lambda)$ for the vector that stacks~$x \in \mathbb{R}^n$ on top of~$\lambda \in \mathbb{R}^m$ on the understanding that any symbols or arguments applied to~$w$ should also be applied to $x$ and $\lambda$. Unless specifically defined, an uppercase symbol represents the diagonal matrix with the items of the corresponding lowercase symbol representing a vector as diagonal elements appearing in the same order, i.e., $\fun{C}(x) = \operatorname{diag}\bigl(\fun{c}(x)\bigr)$ and $\Lambda = \operatorname{diag}(\lambda)$.

For a general function~$\fun{f}$, we denote the Jacobian of~$f$ by~$\fun{J_{\fun{f}}}$. Furthermore, for general functions $\fun{g}$~and~$\fun{h}$, we write $\fun{g}(\mu) = O\bigl(\fun{h}(\mu)\bigr)$ if there exists an $M > 0$ such that for all~$\mu$ with sufficiently small magnitude, $\lvert \fun{g}(\mu) \rvert \leq M \lvert \fun{h}(\mu) \rvert$. We write $\fun{g}(\mu) = \Omega\bigl(\fun{h}(\mu)\bigr)$ if $\fun{h}(\mu) = O\bigl(\fun{g}(\mu)\bigr)$ and write $\fun{g}(\mu) = \Theta\bigl(\fun{h}(\mu)\bigr)$ if $\fun{g}(\mu) = O\bigl(\fun{h}(\mu)\bigr)$ and $\fun{g}(\mu) = \Omega\bigl(h(\mu)\bigr)$. Using this notation, the dependency of the functions $g$~and~$h$ on~$\mu$ is sometimes only implied from the context.

For $x^*$ a solution to~\eqref{eqn:nlp}, we denote the set of indices~$i$ of the active constraints, for which $\fun{c_i}(x^*) = 0$, by~$\fun{\mathcal{A}}(x^*)$; the set of indices of inactive constraints is consequently given by $\{1, \ldots, m\} \setminus \fun{\mathcal{A}}(x^*)$, where we note that all equality constraints are active constraints. As abbreviated notation, we write $\fun{g}$ for the gradient of the objective function~$\fun{f}$, $\fun{H}$ for the Hessian with respect to~$x$ of the Lagrangian~$(x, \lambda) \mapsto \fun{f}(x) - \lambda^T \fun{c}(x)$ and $\fun{A}$ for the Jacobian of the vector-valued constraint function~$\fun{c}$, where a subscript applied to~$\fun{A}$ should be read as a subscript applied to~$\fun{c}$.

Finally, explicit references to multiples of the vector $\begin{pmatrix} 0 & e^T & 0 \end{pmatrix}^T$ should be interpreted with the understanding that the block components are of dimension $n$,~$m_{\mathcal{I}}$ and~$m_{\mathcal{E}}$ respectively.

\subsection{Interior-point methods}

Path-following primal--dual interior-point methods can be motivated by barrier methods, also called \emph{primal} interior-point methods; see, e.g., \cite{FM68,FGW02} for an extensive introduction to both. In a barrier method, inequality constraints are handled through the addition of a barrier term to the objective function that is scaled by~$\mu$, the barrier parameter, which in case of the (natural) log-barrier function results in the objective
\begin{equation*}
	\fun{B}(x, \mu) \triangleq \fun{f}(x) - \mu \sum_{i \in \mathcal{I}} \ln \fun{c_i}(x).
\end{equation*}
Under some conditions, for $\mu > 0$, the barrier function increases in an unbounded fashion for feasible points approaching the boundary, which can be exploited by iterative methods to implicitly enforce the constraint $\fun{c_\mathcal{I}}(x) > 0$. Now, the smaller~$\mu$, the better the barrier term approximates an indicator function for satisfying the inequality constraints strictly and the better the solution of the equality-constrained barrier problem approximates the solution of the original problem. The first-order necessary KKT optimality conditions for $x$ being a minimizer of the resulting problem with $[\lambda]_{\mathcal{E}}$ being the Lagrange multiplier vector to the equality constraints are
\begin{equation*}
	\begin{cases}
		0 = \grad_{\!x} \fun{B}(x, \mu) - \fun{A_\mathcal{E}}(x)^T [\lambda]_\mathcal{E}; \\
		0 = \fun{c_\mathcal{E}}(x),
	\end{cases}
\end{equation*}
where
\begin{equation*}
	\grad_{\!x} \fun{B}(x, \mu) = \grad \fun{f}(x) - \mu \sum_{i \in \mathcal{I}} \frac{1}{\fun{c_i}(x)} \grad \fun{c_i}(x) = \fun{g}(x) - \mu \fun{A_\mathcal{I}}(x)^T \fun{C_\mathcal{I}}(x)^{-1} e;
\end{equation*}
introducing $[\fun{\lambda}(x)]_\mathcal{I} \triangleq \mu \fun{C_\mathcal{I}}(x)^{-1} e$, these optimality conditions are equivalent to
\begin{equation*}
	\begin{cases}
		0 = \fun{g}(x) - \fun{A_\mathcal{I}}(x)^T [\fun{\lambda}(x)]_\mathcal{I} - \fun{A_\mathcal{E}}(x)^T [\lambda]_\mathcal{E}; \\
		 0 = \fun{C_\mathcal{I}}(x) [\fun{\lambda}(x)]_\mathcal{I} - \mu e \qquad \Leftrightarrow \qquad [\fun{\lambda}(x)]_\mathcal{I} = \mu \fun{C_\mathcal{I}}(x)^{-1} e; \\
		0 = \fun{c_\mathcal{E}}(x).
	\end{cases}
\end{equation*}
In a primal--dual interior-point method, the dependency of~$\fun{\lambda}(\cdot)$ on~$x$ is lifted and $[\lambda]_\mathcal{I}$ is treated as an independent variable, like~$[\lambda]_\mathcal{E}$. For a chosen barrier parameter, a solution~$(x, \lambda)$ under the implicit constraints~$\bigl(\fun{c_{\mathcal{I}}}(x), [\lambda]_{\mathcal{I}}\bigr) > 0$ to
\begin{equation*}
	0 = \fun{F^\mu}(x, \lambda) \triangleq \begin{pmatrix} \fun{g}(x) - \fun{A}(x)^T \lambda \\ \fun{C_\mathcal{I}}(x) [\lambda]_\mathcal{I} - \mu e \\ \fun{c_\mathcal{E}}(x) \end{pmatrix}
\end{equation*}
is sought, which are perturbed optimality conditions to the original problem~\eqref{eqn:nlp}, in the sense that the complementarity condition is perturbed by~$\mu$. As the Jacobian of~$\fun{F^{\mu}}$ is independent of the choice of~$\mu$, we drop the superscript when referring to it.

Rather than solving the perturbed system for a predetermined small value for the barrier parameter, which can be difficult to achieve efficiently, a common approach is to use \emph{outer} iterations to approximately solve perturbed problems for a decreasing sequence of barrier parameters in \emph{inner} iterations, where the next inner iteration is started using information about the solution of the previous. The hope here is that the solutions are close enough to each other, to limit the number of inner iterations needed: as shown in~\cite{FM68}, under some conditions, there exists a sufficiently smooth trajectory called the \emph{barrier trajectory} of solutions of the perturbed problem parameterized by the barrier parameter for a small enough values, including zero yielding the solution of the original problem, and hence the characterization of such methods as trajectory-following.

\subsection{Extrapolation methods}

It has been demonstrated in~\cite{FM68} how to use a Taylor-series approximation using analytical expressions for the derivatives of the barrier trajectory to obtain an approximation to the solution of the original problem at~$\mu = 0$ given the exact solution to the perturbed problem with~$\mu > 0$. More practically, an accelerator is described where the trajectory is approximated by a polynomial that goes through previously obtained approximate solutions for perturbed problems and this approximation is used to obtain a starting point for the next inner iteration as accelerator.

Following a different approach, the term extrapolation has been used in~\cite{BDM93} in the context of a primal penalty-barrier method, in which equality constraints are handled by penalizing the objective based on a measure of not attaining the constraints. At the start of each inner iteration, an extrapolation step is made by following a first-order Taylor-series approximation to an implicit function that describes both the current iteration point and the first-order optimal solution of the perturbed problem. Continuing the inner iteration with Newton steps until the solution of the perturbed problem is sufficiently well approximated, asymptotically, only a single Newton step is needed and two-step superlinear convergence was shown.

Similarly, for primal--dual interior-point methods, superlinear convergence has been proven in~\cite{GOST01} by taking a Newton step at the beginning of each inner iteration. An alternative view on this step is given as it being a combination of the step following the first-order Taylor-series approximation to an implicit function that keeps the residual of the perturbed optimality conditions but varies the barrier parameter and the Newton step using the barrier parameter of the previous inner iteration. One-step superlinear convergence for a modified version of the barrier method has been proven in~\cite{WJ99} in the case of a linear objective function by starting each inner iteration with a Newton step with the previous instead of current barrier parameter in the coefficient matrix.

A common approach is to solve linear systems in the Jacobian of the perturbed optimality conditions, of which constructing the matrix decomposition forms an expensive part. Ways of reusing it across different linear systems have been explored, of which Mehrotra's predictor--corrector method is an example of method that gained popularity -- introduced in~\cite{Meh91a} for linear programming but also widely used for solving quadratic programming problems. Each iteration, in which a single decomposition is used twice, consists of the combination of what is equivalent to a second-order Taylor-series approximation to the solution of the original problem -- computed in two linear systems -- and a first-order approximation to the barrier trajectory -- computed in the second linear system for the \emph{corrector} step based on the decrease in mean complementarity by following the possibly shortened \emph{predictor} step computed in the first system. For linear complementarity problems, an algorithm has been introduced in~\cite{WZ96} that uses a Shamanskii-like variant on Newton's method in which, after obtaining a Newton step by solving a linear system, systems using the same coefficient matrix but with updated right-hand sides by following the resulting steps get computed. By increasing the number of iterations, a theoretical arbitrary rate of convergence is obtained.
 
For methods using Taylor-series approximations to the solution of the perturbed problem, higher-order schemes have been given in \cite{Dus05}~and~\cite{Dus10} for primal penalty and barrier methods with asymptotic convergence rates and such a scheme has been proposed in~\cite{EV24} for a primal--dual interior-point method; for linear programming problems, convergence results have been given in~\cite{Car09} for methods using second-order approximations in the same setting. As noticed there, for linear complementarity problems, complexity bounds have been given in~\cite{ZZ95} for two algorithms following both the second-order approximation to the perturbed problem as well as the predictor--corrector spirit. A higher-order primal-dual interior-point method for quadratic programming problems has been analyzed in~\cite{EV22} that uses Taylor-series approximations to the solution of both the original problem and the perturbed problem.

In this work, the asymptotic behavior of a method using higher-order Taylor-series approximations to approximate the solution of the perturbed problem is studied for a primal--dual interior-point method. It provides a generalization of the results in the unpreconditioned case from~\cite{GOST01} to higher-order convergence rates at the cost of assuming an additional order of smoothness, with similar termination criteria for the inner iterations. Also, this work provides the missing convergence characteristic of such a method hypothesized in~\cite{Dus10} in the context of different interior-point methods. Comparing the steps taken in the proposed algorithm in~\cite{EV24} with the extrapolation step described in this work, this work provides local convergence theory for that algorithm.

In section~\ref{ch:trajectories}, the function to obtain the Taylor-series approximation to are formally defined, which are used in section~\ref{ch:extrapolation-step} to formally define the extrapolation step and obtain asymptotic properties of it. The needed computations, with an explicit description for the quadratic programming case, to obtain the extrapolation step are then described in section~\ref{ch:extrapolation-step-computation}, based on which the local convergence of a framework in which extrapolation steps are taken is described in section~\ref{ch:extrapolation-step-local-convergence}. Lastly, computational results for quadratic programming problems are shown in section~\ref{ch:numerical-experiments} to evaluate the performance of an extrapolation step as accelerator.

\section{Trajectories}
\label{ch:trajectories}

In this section, we define the functions that will be used in the next section as part of an extrapolation method, to approximate the solution of the perturbed problem with perturbation~$\mu$ using a Taylor-series approximation from any point at which the norm of~$\fun{F^\mu}$ is sufficiently small.

We start by stating our assumptions on the solution of~\eqref{eqn:nlp}.
\begin{assumption}
	\label{asm:licq}
	Given a KKT point~$x^* \in \mathbb{R}^n$ for the problem described by~\eqref{eqn:nlp}, assume the \emph{linear independence constraint qualification (LICQ)} holds at~$x^*$; that is, assume that the set~$\{\grad \fun{c_i}(x^*)\}_{i \in \mathcal{A}(x^*)}$ of active-constraint gradients at~$x^*$ consists of linearly independent vectors.
\end{assumption}
\begin{assumption}
	\label{asm:strict-complementarity}
	Given a KKT point~$x^* \in \mathbb{R}^n$ for the problem described by~\eqref{eqn:nlp}, assume that \emph{strict complementarity} holds at~$x^*$; that is, assume that there exists a~$\lambda^* \in \mathbb{R}^m$ that fulfills the conditions
	\begin{equation*}
		\fun{g}(x^*) = \fun{A}(x^*)^T \lambda^*, \quad \fun{C_\mathcal{I}}(x^*) [\lambda^*]_\mathcal{I} = 0 \quad \text{and} \quad [\lambda^*]_\mathcal{I} \geq 0
	\end{equation*}
	for being a Lagrange multiplier vector to the inequality constraints for which for all~$i \in \mathcal{I} \cap \fun{\mathcal{A}}(x^*)$, $[\lambda^*]_i > 0$.
\end{assumption}
\begin{assumption}
	\label{asm:strong-second-order-sufficiency-condition}
	Given a KKT point~$x^* \in \mathbb{R}^n$ for the problem described by~\eqref{eqn:nlp}, assume that the \emph{strong second-order sufficiency condition} is satisfied at~$x^*$; that is, assume that there exists an~$\omega > 0$ such that $p^T \fun{H}(x^*, \lambda^*) p \geq \omega \lVert p \rVert^2$ for all~$p \in \mathbb{R}^n$ for which for all~$i \in \fun{\mathcal{A}}(x^*)$, $\grad \fun{c_i}(x^*)^T p = 0$.
\end{assumption}

It follows under Assumption~\ref{asm:licq} that there exists for each KKT point~$x^*$ a unique Lagrange multiplier vector~$\lambda^*$ to~$\fun{c}$ at~$x^*$, which under Assumption~\ref{asm:strict-complementarity} has strictly positive components for the components corresponding to the active inequality constraints at~$x^*$.

The following result provides the basis for solving the problem using trajectories and is commonly used in different variations in the context of interior-point methods; see, e.g., \cite{FM68,BDM93,WJ99}.
\begin{lemma}
	\label{thm:existence-unique-barrier-trajectories}
	\begin{subequations} \label{eqn:implicit-function-general-trajectory}
	Let~$x^* \in \mathbb{R}^n$ be a KKT point for the problem described by~\eqref{eqn:nlp} under Assumptions~\ref{asm:licq}, \ref{asm:strict-complementarity} and \ref{asm:strong-second-order-sufficiency-condition}, such that there exists a unique Lagrange multiplier vector~$\lambda^*$ of problem~\eqref{eqn:nlp} to~$\fun{c}$ at~$x^*$. Then, there exists a locally unique function~$\fun{w^{w^*\!,0}} \colon \mathbb{R}^{(n + m)} \to \mathbb{R}^{(n + m)}$ depending on $w^* = (x^*, \lambda^*)$ that is $(d - 1)$~times continuously differentiable on a neighborhood of~$r = 0$ such that locally
	\begin{equation}
		\fun{F^0}\bigl(\fun{w^{w^*\!,0}}(r)\bigr) = r \label{eqn:implicit-function-general-trajectory-definition}
	\end{equation}
	and
	\begin{equation}
		\fun{w^{w^*\!,0}}(0) = w^*. \label{eqn:implicit-function-general-trajectory-limit}
	\end{equation}
	\end{subequations}
\end{lemma}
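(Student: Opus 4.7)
The plan is to apply the implicit function theorem to the map $G(w, r) \triangleq \fun{F^0}(w) - r$ at the point $(w^*, 0)$. Since $w^*$ is a KKT pair, $G(w^*, 0) = 0$, so what needs to be checked is that the partial Jacobian $\fun{J_F}(w^*)$ is nonsingular and that $\fun{F^0}$ is smooth enough. For smoothness, $\fun{F^0}$ involves first derivatives of $\fun{f}$ and $\fun{c}$, so it is $(d-1)$ times continuously differentiable under the blanket assumption that the problem functions are $d$ times continuously differentiable; the IFT then yields an implicit function of the same regularity, giving exactly the claimed $(d-1)$ smoothness of $\fun{w^{w^*\!,0}}$.

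The main work is thus showing $\fun{J_F}(w^*)$ is nonsingular. Written in block form with rows ordered as the stationarity, inequality-complementarity, and equality-feasibility blocks, it reads
\begin{equation*}
	\fun{J_F}(w^*) = \begin{pmatrix} \fun{H}(x^*, \lambda^*) & -\fun{A_\mathcal{I}}(x^*)^T & -\fun{A_\mathcal{E}}(x^*)^T \\ \Lambda_\mathcal{I}^* \fun{A_\mathcal{I}}(x^*) & \fun{C_\mathcal{I}}(x^*) & 0 \\ \fun{A_\mathcal{E}}(x^*) & 0 & 0 \end{pmatrix}.
\end{equation*}
I would split $\mathcal{I}$ into the active part $\mathcal{I} \cap \mathcal{A}(x^*)$ and the inactive part using Assumption~\ref{asm:strict-complementarity}, so that on the inactive rows $\fun{c_i}(x^*) > 0$ and $[\lambda^*]_i = 0$, while on the active rows $\fun{c_i}(x^*) = 0$ and $[\lambda^*]_i > 0$. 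Suppose $\fun{J_F}(w^*) (p, q) = 0$. The inactive inequality rows of the middle block immediately force $[q]_i = 0$ for $i \notin \mathcal{A}(x^*)$; the active inequality rows together with the equality-feasibility block force $\grad \fun{c_i}(x^*)^T p = 0$ for all $i \in \mathcal{A}(x^*)$.

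Premultiplying the first block-row equation by $p^T$ and using that $p^T \fun{A}(x^*)^T q = \sum_i [q]_i \grad \fun{c_i}(x^*)^T p$ vanishes (either the multiplier component or the gradient component is zero), I obtain $p^T \fun{H}(x^*, \lambda^*) p = 0$; Assumption~\ref{asm:strong-second-order-sufficiency-condition} then yields $p = 0$ since $p$ lies in the critical cone's subspace. Substituting $p = 0$ back leaves $\fun{A_{\mathcal{A}(x^*)}}(x^*)^T [q]_{\mathcal{A}(x^*)} = 0$, and the LICQ of Assumption~\ref{asm:licq} forces $[q]_{\mathcal{A}(x^*)} = 0$, completing $q = 0$. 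Hence $\fun{J_F}(w^*)$ is nonsingular, and applying the IFT to $G$ at $(w^*, 0)$ produces the locally unique $\fun{w^{w^*\!,0}}$ satisfying \eqref{eqn:implicit-function-general-trajectory-definition} and \eqref{eqn:implicit-function-general-trajectory-limit} with the required smoothness.

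The step I expect to require the most care is assembling the nonsingularity argument so that each of the three assumptions is used in precisely one place and the bookkeeping between the active and inactive inequality blocks is clean; the IFT application itself, and the regularity count, are then routine.
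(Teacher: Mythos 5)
Your proposal is correct and follows the same overall route as the paper: both apply the implicit function theorem to the map $(w, r) \mapsto \fun{F^0}(w) - r$ at $(w^*, 0)$, with the regularity count $d - 1$ coming from the fact that $\fun{F^0}$ is built from first derivatives of $\fun{f}$ and $\fun{c}$. The one substantive difference is that the paper dispatches the key hypothesis -- nonsingularity of $\fun{J_{\fun{F}}}(w^*)$ -- by citing the proof of Theorem~17 in \cite{FM68}, whereas you prove it from scratch. Your argument is the standard one and is sound: the inactive complementarity rows kill the inactive multiplier components of a null vector $(p, q)$ since $\fun{c_i}(x^*) > 0$ and $[\lambda^*]_i = 0$ there; the active complementarity rows and the equality rows put $p$ in the null space of the active constraint gradients via strict complementarity; premultiplying the stationarity row by $p^T$ then makes the cross term $p^T \fun{A}(x^*)^T q$ vanish term by term, so the strong second-order sufficiency condition forces $p = 0$; and LICQ finishes off the remaining multiplier components. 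Each assumption is used exactly once, as you anticipated. Your version is more self-contained at the cost of length; the paper's is shorter but leaves the reader to unpack a fifty-year-old reference. Both are acceptable, and there is no gap.
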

\begin{proof}
	Define
	\begin{equation*}
		\fun{h}(r, w) \triangleq \fun{F^0}(w) - r.
	\end{equation*}
	Clearly, $\fun{h}$ is as often differentiable as~$\fun{F^0}$ is: $(d - 1)$~times. Since $\fun{J_{\fun{F}}}(w^*)$ is invertible by~\cite[proof of Thm.~17]{FM68} under the stated assumptions and since
	\begin{equation*}
		\fun{h}(0, w^*) = \fun{F^0}(w^*) - 0 = 0,
	\end{equation*}
	the result follows from applying the implicit function theorem.
\end{proof}

While Lemma~\ref{thm:existence-unique-barrier-trajectories} guarantees the existence of a function through which an optimal solution to the problem can be found by~\eqref{eqn:implicit-function-general-trajectory-limit}, an analytical expression for it depends on this optimal solution, which is unknown, and what we are left with is the implicit definition~\eqref{eqn:implicit-function-general-trajectory-definition} only. However, differentiating this same \eqref{eqn:implicit-function-general-trajectory-definition} with respect to its argument, given the value of~$\fun{w^{w^*\!,0}}(r)$, we are able to obtain analytic expressions for the derivatives of the trajectory up to but not including the $d$th-order without explicit knowledge of the optimal point; this will later be explored in section~\ref{ch:extrapolation-step-computation}.

For the special case in~\eqref{eqn:implicit-function-general-trajectory-definition} of $r$ being a multiple of~$\begin{pmatrix} 0 & e^T & 0 \end{pmatrix}^T$, we define
\begin{equation*}
	\fun{w^{w^*}}(\mu) \triangleq \fun{w^{w^*\!,0}}\Bigl(\begin{pmatrix} 0 & \mu e^T & 0 \end{pmatrix}^T\Bigr),
\end{equation*}
which defines the barrier trajectory. Strict feasibility to the inequality constraints follows for the corresponding points for $\mu > 0$ from the assumption of strict complementarity, as will later be shown as part of the proof of Lemma~\ref{thm:primal-dual-higher-order-termination-criteria-fulfilling}.

Given that we are only interested in approximating~$\fun{w^{w^*}}(\mu)$ from a point~$\fun{w^{w^*\!,0}}(r)$, we consider a different function -- defined in a similar way to a function in~\cite{Dus05} -- in the following corollary. It joins those two points with a curve that is parameterized by a only single scalar, whose domain is chosen to scale with the distance between the points as in the original function. By using a single scalar argument, we are guided to the barrier trajectory with less degrees of freedom to handle when computing the Taylor-series approximation using the derivatives of the function.
\begin{corollary}
	\label{thm:existence-unique-barrier-trajectories-scalar}
	\begin{subequations} \label{eqn:implicit-function-barrier-scalar-trajectory}
	Let~$x^* \in \mathbb{R}^n$ be a KKT point for the problem described by~\eqref{eqn:nlp} under Assumptions \ref{asm:licq},~\ref{asm:strict-complementarity} and~\ref{asm:strong-second-order-sufficiency-condition}, such that there exists a unique Lagrange multiplier vector~$\lambda^*$ of problem~\eqref{eqn:nlp} to~$\fun{c}$ at~$x^*$. For any real-valued vector~$r$, we define the function~$\operatorname{nml}$ to normalize~$r$ under the relation $\operatorname{nml}(r) \lVert r \rVert \equiv r$ through
	\begin{equation*}
		\operatorname{nml}(r) \triangleq \begin{cases} \frac{r}{\lVert r \rVert}, & r \neq 0; \\ 0, & \text{otherwise}. \end{cases}
	\end{equation*}
	Then, there exists a function~$\fun{w^{w^*\!, \mu, r}} \colon \mathbb{R} \to \mathbb{R}^{(n + m)}$ depending on $w^* = (x^*, \lambda^*)$ for all $\mu$~and~$r$ independently sufficiently small that is $(d - 1)$~times continuously differentiable on a neighborhood of~$\rho \in [0, \lVert r \rVert ]$ such that locally
	\begin{equation}
		\fun{F^\mu}\bigl(\fun{w^{w^*\!, \mu, r}}(\rho)\bigr) = \rho \operatorname{nml}(r) \label{eqn:implicit-function-barrier-scalar-trajectory-definition}
	\end{equation}
	and
	\begin{equation}
		\fun{w^{w^*\!, \mu, r}}(0) = \fun{w^{w^*}}(\mu). \label{eqn:implicit-function-barrier-scalar-trajectory-limit}
	\end{equation}
	\end{subequations}
\end{corollary}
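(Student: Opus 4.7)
The plan is to reduce the statement to Lemma~\ref{thm:existence-unique-barrier-trajectories} by exploiting the fact that $\fun{F^\mu}$ and $\fun{F^0}$ differ only by the constant shift $\mu \begin{pmatrix} 0 & e^T & 0 \end{pmatrix}^T$, so that the target equation $\fun{F^\mu}(w) = \rho \operatorname{nml}(r)$ is equivalent to $\fun{F^0}(w) = \rho \operatorname{nml}(r) + \mu \begin{pmatrix} 0 & e^T & 0 \end{pmatrix}^T$. This identity is the whole engine of the corollary: the scalar-parameterized trajectory we seek is nothing but the $(n+m)$-argument trajectory of the lemma restricted to an affine ray in its argument space.

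Concretely, I would define
\begin{equation*}
	\fun{w^{w^*\!, \mu, r}}(\rho) \triangleq \fun{w^{w^*\!,0}}\Bigl(\rho \operatorname{nml}(r) + \mu \begin{pmatrix} 0 & e^T & 0 \end{pmatrix}^T\Bigr)
\end{equation*}
and then verify the two claims in turn. Applying $\fun{F^0}$ on both sides and using~\eqref{eqn:implicit-function-general-trajectory-definition} gives $\fun{F^0}\bigl(\fun{w^{w^*\!, \mu, r}}(\rho)\bigr) = \rho \operatorname{nml}(r) + \mu \begin{pmatrix} 0 & e^T & 0 \end{pmatrix}^T$, which upon subtracting the shift yields~\eqref{eqn:implicit-function-barrier-scalar-trajectory-definition}; substituting $\rho = 0$ gives $\fun{w^{w^*\!,0}}\bigl(\mu \begin{pmatrix} 0 & e^T & 0 \end{pmatrix}^T\bigr)$, which coincides with~$\fun{w^{w^*}}(\mu)$ by the paper's definition of the barrier trajectory, establishing~\eqref{eqn:implicit-function-barrier-scalar-trajectory-limit}. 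For smoothness, I would note that the inner map $\rho \mapsto \rho \operatorname{nml}(r) + \mu \begin{pmatrix} 0 & e^T & 0 \end{pmatrix}^T$ is affine and therefore arbitrarily smooth in~$\rho$ when $r$ and $\mu$ are held fixed as parameters, so the composition inherits $(d - 1)$-times continuous differentiability from Lemma~\ref{thm:existence-unique-barrier-trajectories}.

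The smallness hypothesis on $r$ and $\mu$ enters only to keep the argument of $\fun{w^{w^*\!,0}}$ inside the neighborhood of the origin on which Lemma~\ref{thm:existence-unique-barrier-trajectories} supplies the implicit function: for any $\rho$ in a small neighborhood of $[0, \lVert r \rVert]$ the norm of that argument is bounded by $\lVert r \rVert + \lvert \mu \rvert \sqrt{m_{\mathcal{I}}}$ up to an arbitrarily small slack, which is as small as desired when $\lVert r \rVert$ and $\lvert \mu \rvert$ are taken independently small. I do not expect a real obstacle here; the only delicate point is the discontinuity of $\operatorname{nml}$ at the origin, but because $r$ is held fixed while differentiating with respect to~$\rho$, $\operatorname{nml}(r)$ enters as a constant coefficient and the discontinuity is inert. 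The edge case $r = 0$ degenerates to $[0, \lVert r \rVert] = \{0\}$, where only~\eqref{eqn:implicit-function-barrier-scalar-trajectory-limit} needs to be checked, and that is immediate from the construction.
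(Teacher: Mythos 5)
Your proposal is correct and follows essentially the same route as the paper: both reduce to Lemma~\ref{thm:existence-unique-barrier-trajectories} by composing $\fun{w^{w^*\!,0}}$ with the affine ray $\rho \mapsto \rho \operatorname{nml}(r) + \mu \begin{pmatrix} 0 & e^T & 0 \end{pmatrix}^T$ and using that $\fun{F^\mu}$ and $\fun{F^0}$ differ by a constant shift. The only cosmetic difference is that the paper introduces the intermediate function $\fun{w^{w^*\!,\mu}}(r) = \fun{w^{w^*\!,0}}\bigl(r + \begin{pmatrix} 0 & \mu e^T & 0 \end{pmatrix}^T\bigr)$ before restricting to the ray, whereas you inline the two steps into a single definition.
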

\begin{proof}
	For all $\mu$~and~$r$ independently sufficiently small, $r + \begin{pmatrix} 0 & \mu e^T & 0 \end{pmatrix}^T$ lies in the neighborhood of~$0$ on which Lemma~\ref{thm:existence-unique-barrier-trajectories} guarantees the existence of a $(d - 1)$~times continuously differentiable function $\fun{w^{w^*}} \colon \mathbb{R}^{(n + m)} \to \mathbb{R}^{(n + m)}$ depending on~$w^*$ that locally fulfills~\eqref{eqn:implicit-function-general-trajectory}. We can then define the equally smooth function $\fun{w^{w^*\!, \mu}} \colon \mathbb{R}^{(n + m)} \to \mathbb{R}^{(n + m)}$ for those small values of~$r$ by
	\begin{equation*}
		\fun{w^{w^*\!, \mu}}(r) \triangleq \fun{w^{w^*}}\Bigl(r + \begin{pmatrix} 0 & \mu e^T & 0 \end{pmatrix}^T\Bigr).
	\end{equation*}
	With this, locally
	\begin{align*}
		\fun{F^\mu}\bigl(\fun{w^{w^*\!, \mu}}(r)\bigr)
		&= \fun{F^0}\bigl(\fun{w^{w^*\!, \mu}}(r)\bigr) - \begin{pmatrix} 0 & \mu e^T & 0 \end{pmatrix}^T \\
		&= \fun{F^0}\Biggl(\fun{w^{w^*}}\Bigl(r + \begin{pmatrix} 0 & \mu e^T & 0 \end{pmatrix}^T\Bigr)\Biggr) - \begin{pmatrix} 0 & \mu e^T & 0 \end{pmatrix}^T \\
		&= r + \begin{pmatrix} 0 & \mu e^T & 0 \end{pmatrix}^T - \begin{pmatrix} 0 & \mu e^T & 0 \end{pmatrix}^T = r
	\end{align*}
	and
	\begin{align*}
		\fun{w^{w^*\!, \mu}}(0) = \fun{w^{w^*}}(\mu).
	\end{align*}
	Moreover, it follows directly from the above that the function defined by
	\begin{equation*}
		\fun{w^{w^*\!, \mu, r}}(\rho) \triangleq \fun{w^{w^*\!, \mu}}\bigl(\rho \operatorname{nml}(r)\bigr)
	\end{equation*}
	fulfills the desired properties.
\end{proof}

A reason for going through the function defined in Lemma~\ref{thm:existence-unique-barrier-trajectories} instead of deriving~$\fun{w^{w^*\!, \mu}}$ directly using the implicit function theorem, as done in the proof there, is to let the notion of sufficiently small for~$r$ be independent from~$\mu$, as will be used in the next section for an extrapolation method.

A natural question to ask is what happens outside of the neighborhood provided by Lemma~\ref{thm:existence-unique-barrier-trajectories}. There, solutions in~$w$ to $\fun{F^0}(w) = r$ are not necessarily unique and form a continuous trajectory: see \cite{CGGK05} for examples. Outside of this neighborhood, we therefore cannot obtain an extrapolation step with the interpretation of it being a step to the Taylor-series approximation of the above function, but, as will be described in section~\ref{ch:extrapolation-step-computation}, it is possible to describe a step that equals the extrapolation step in the neighborhood for all points~$w$ for which $\fun{J_{\fun{F}}}(w)$ is nonsingular.

\section{Extrapolation step}
\label{ch:extrapolation-step}

In this section, we consider the asymptotic behavior and the speed of convergence of methods based on extrapolation of the previously described function to the barrier trajectory. As we will see, by taking an extrapolation step as first step after decreasing the barrier parameter, asymptotically, the stopping criteria for the inner minimization method will immediately be satisfied.

For the starting point~$w_{k+1}$ of outer iteration~$k + 1$, to fit the notation of the function defined previously, we assign a name to the residual through $r_{k+1} \triangleq \fun{F^{\mu_{k+1}}}(w_{k+1})$. Using this, we define~$w^{w^*\!, p}_{k+1}$ for $p < d - 1$ as the $p$th-order Taylor-series approximation at
\begin{equation}
	\fun{w^{w^*\!, \mu_{k+1}, r_{k+1}}}(\lVert r_{k+1} \rVert) \label{eqn:primal-dual-extrapolation-iteration}
\end{equation}
to
\begin{equation*}
	\fun{w^{w^*}}(\mu_{k+1}) = \fun{w^{w^*\!, \mu_{k+1}, r_{k+1}}}(0)
\end{equation*}
for those points~$w_{k+1}$ for which this concept is well defined in accordance with Corollary~\ref{thm:existence-unique-barrier-trajectories-scalar}. The componentwise error of this approximation is by Taylor's theorem for all $j = 1, \ldots, n + m$,
\begin{equation}
	\bigl[\fun{w^{w^*}}(\mu_{k+1}) - w^{w^*\!, p}_{k+1}\bigr]_j = \fun{O}\bigl(\lVert r_{k+1} \rVert^{p+1}\bigr), \label{eqn:primal-dual-extrapolation-error}
\end{equation}
where the use of the $\fun{O}$~notation is justified by the $(d - 1)$~times continuously differentiability of the function involved.

In the context of the following lemma describing asymptotic properties of the extrapolation step, similar to the setting in~\cite{GOST01}, we use for the inner minimization the termination criterion
\begin{equation}
	\lVert \fun{F^{\mu_k}}(w_{k+1}) \rVert \leq \fun{\epsilon}(\mu_k). \label{eqn:primal-dual-termination} 
\end{equation}
for~$\fun{\epsilon}$ a positive scalar function such that $\fun{\epsilon}(\mu_k) = \fun{\Theta}(\mu_k)$. Notably, the implicit constraints $(\fun{c_\mathcal{I}}(x_{k+1}), [\lambda_{k+1}]_\mathcal{I}) > 0$ that are part of the perturbed problem are missing here. In the presented analysis, we assume the existence of a subsequence of iterates converging to a solution to the original problem by staying in a neighborhood of the barrier trajectory and the requirement of strict feasibility is therefore implied by the assumption of strict complementarity.
\begin{lemma}
	\label{thm:primal-dual-higher-order-termination-criteria-fulfilling}
	Under the assumptions of Lemma~\ref{thm:existence-unique-barrier-trajectories-scalar}, including Assumption~\ref{asm:strict-complementarity}, let $\{\mu_k\}_{k \in \mathbb{N}}$ be a strictly decreasing sequence of positive scalars and let $\{w_k\}_{k \in \mathbb{N}}$ be a sequence of iterates fulfilling~\eqref{eqn:primal-dual-termination} such that there exists a subsequence indexed by~$\mathcal{K}$ for which $\{w_{k+1}\}_{k \in \mathcal{K}} \to w^*$. Furthermore, let $p \in \{1, \ldots, d - 2\}$.
	Then, for all $k \in \mathcal{K}$ sufficiently large, $w^{w^*\!, p}_{k+1}$ is well defined and $w_{k+1}$ equals the expression in~\eqref{eqn:primal-dual-extrapolation-iteration}. Assuming $\mu_{k+1} = \fun{\Omega}\bigl(\mu_k^{p+\gamma}\bigr)$ for $\gamma \in (0, 1)$, then
	\begin{subequations}
	\label{eqn:primal-dual-extrapolation-termination}
	\begin{align}
		\bigl\lVert \fun{F^{\mu_{k+1}}}(w^{w^*\!, p}_{k+1}) \bigr\rVert &\leq \fun{\epsilon}(\mu_{k+1}) \quad \text{and} \label{eqn:primal-dual-extrapolation-termination-norm} \\
		\bigl(\fun{c_\mathcal{I}}\bigl(x^{w^*\!, p}_{k+1}\bigr), [\lambda^{w^*\!, p}_{k+1}]_\mathcal{I}\bigr) &> 0. \label{eqn:primal-dual-extrapolation-termination-positive}
	\end{align}
	\end{subequations}
	Also, for all~$j = 1, \ldots, n + m$,
	\begin{equation}
		\bigl[w^{w^*\!, p}_{k+1} - w^*\bigr]_j = \fun{O}(\mu_{k+1}) \label{eqn:primal-dual-extrapolation-distance-optimality-upper-bound}
	\end{equation}
	and more specifically, for those values of~$j$ for which $\bigl[\fun{\dot{w}^{w^*}}(0)\bigr]_j \neq 0$,
	\begin{equation}
		\bigl[w^{w^*\!, p}_{k+1} - w^*\bigr]_j = \fun{\Theta}(\mu_{k+1}). \label{eqn:primal-dual-extrapolation-distance-optimality}
	\end{equation}
\end{lemma}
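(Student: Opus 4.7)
The plan is to first control $r_{k+1}$, then transfer the Taylor remainder bound from~\eqref{eqn:primal-dual-extrapolation-error} into a strictly higher-order estimate in $\mu_{k+1}$ using the gap $\mu_{k+1} = \fun{\Omega}(\mu_k^{p+\gamma})$, and finally use that estimate to chase down the three conclusions. Writing $\fun{F^{\mu_{k+1}}}(w_{k+1}) = \fun{F^{\mu_k}}(w_{k+1}) + (\mu_k - \mu_{k+1})\begin{pmatrix} 0 & e^T & 0 \end{pmatrix}^T$, the termination criterion~\eqref{eqn:primal-dual-termination} together with $\fun{\epsilon}(\mu_k) = \fun{\Theta}(\mu_k)$ immediately gives $\lVert r_{k+1} \rVert = \fun{O}(\mu_k)$. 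For $k \in \mathcal{K}$ sufficiently large, $w_{k+1}$ is close to $w^*$ and both $\mu_{k+1}$ and $\lVert r_{k+1} \rVert$ are small enough for Corollary~\ref{thm:existence-unique-barrier-trajectories-scalar} to apply; local uniqueness of preimages of $\fun{F^{\mu_{k+1}}}$ near $w^*$ then pins down $w_{k+1} = \fun{w^{w^*\!, \mu_{k+1}, r_{k+1}}}(\lVert r_{k+1} \rVert)$, validating the expression~\eqref{eqn:primal-dual-extrapolation-iteration} and the definition of $w^{w^*\!, p}_{k+1}$. Combining this with~\eqref{eqn:primal-dual-extrapolation-error} and the observation that $\mu_k^{p+1} = \fun{O}\bigl(\mu_{k+1}^{(p+1)/(p+\gamma)}\bigr)$, whose exponent strictly exceeds $1$ because $\gamma < 1$, yields the key intermediate estimate $\bigl[\fun{w^{w^*}}(\mu_{k+1}) - w^{w^*\!, p}_{k+1}\bigr]_j = \fun{o}(\mu_{k+1})$.

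With this in hand, \eqref{eqn:primal-dual-extrapolation-termination-norm} follows from $\fun{F^{\mu_{k+1}}}\bigl(\fun{w^{w^*}}(\mu_{k+1})\bigr) = 0$ and Lipschitz continuity of $\fun{F^{\mu_{k+1}}}$ on a fixed neighborhood of $w^*$, since then $\bigl\lVert \fun{F^{\mu_{k+1}}}(w^{w^*\!, p}_{k+1}) \bigr\rVert = \fun{o}(\mu_{k+1}) \leq \fun{\epsilon}(\mu_{k+1})$ for all sufficiently large $k$. For~\eqref{eqn:primal-dual-extrapolation-termination-positive}, I first verify positivity at $\fun{w^{w^*}}(\mu_{k+1})$: complementarity reads $c_i\!\cdot\! [\lambda]_i = \mu_{k+1}$ for $i \in \mathcal{I}$, and strict complementarity distinguishes the two cases so that for $i$ active at $w^*$, $[\lambda^{w^*}(\mu_{k+1})]_i \to [\lambda^*]_i > 0$ forces $c_i(x^{w^*}(\mu_{k+1})) = \fun{\Theta}(\mu_{k+1})$, while for $i$ inactive, $c_i(x^{w^*}(\mu_{k+1})) \to c_i(x^*) > 0$ forces $[\lambda^{w^*}(\mu_{k+1})]_i = \fun{\Theta}(\mu_{k+1})$. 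Either way, each component of $(c_\mathcal{I}, [\lambda]_\mathcal{I})$ at $\fun{w^{w^*}}(\mu_{k+1})$ is bounded below by a positive multiple of $\mu_{k+1}$, so the componentwise $\fun{o}(\mu_{k+1})$ perturbation from $\fun{w^{w^*}}(\mu_{k+1})$ to $w^{w^*\!, p}_{k+1}$ cannot change any sign.

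For the distance-to-optimality claims, I decompose $w^{w^*\!, p}_{k+1} - w^* = \bigl(w^{w^*\!, p}_{k+1} - \fun{w^{w^*}}(\mu_{k+1})\bigr) + \bigl(\fun{w^{w^*}}(\mu_{k+1}) - w^*\bigr)$. The first term is $\fun{o}(\mu_{k+1})$ componentwise by the above; the second, using the twice continuous differentiability of $\fun{w^{w^*}}$ implied by $d \geq 3$, expands by Taylor's theorem as $\fun{\dot{w}^{w^*}}(0)\mu_{k+1} + \fun{O}(\mu_{k+1}^2)$. Together these yield~\eqref{eqn:primal-dual-extrapolation-distance-optimality-upper-bound} unconditionally, and for components with $[\fun{\dot{w}^{w^*}}(0)]_j \neq 0$ the linear term strictly dominates both remainders, giving~\eqref{eqn:primal-dual-extrapolation-distance-optimality}. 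The main obstacle is the step converting $\fun{O}(\mu_k^{p+1})$ into $\fun{o}(\mu_{k+1})$: every downstream conclusion rides on the strict inequality of orders that $\gamma < 1$ supplies, and some care is needed to make the $\fun{O}$ constants uniform along $\mathcal{K}$ so that the comparisons actually trigger for all large $k$.
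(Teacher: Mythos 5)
Your proposal is correct and follows essentially the same route as the paper's own proof: bound $\lVert r_{k+1}\rVert = \fun{O}(\mu_k)$ via the termination criterion, invoke Corollary~\ref{thm:existence-unique-barrier-trajectories-scalar} and local uniqueness to identify $w_{k+1}$ with~\eqref{eqn:primal-dual-extrapolation-iteration}, convert the Taylor remainder $\fun{O}(\mu_k^{p+1})$ into $\fun{O}\bigl(\mu_{k+1}^{(p+1)/(p+\gamma)}\bigr)$ with exponent exceeding~$1$, and then deduce the residual bound, the active/inactive sign analysis, and the distance estimates by the same decompositions. The only cosmetic difference is that you phrase the key intermediate estimate as $\fun{o}(\mu_{k+1})$ where the paper keeps the explicit exponent $\tfrac{p+1}{p+\gamma}$, which is equivalent.
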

\begin{proof}
	Applying the triangle inequality, we write
	\begin{equation*}
		\lVert r_{k+1} \rVert = \lVert \fun{F^{\mu_{k+1}}}(w_{k+1}) \rVert \leq \lVert \fun{F^{\mu_k}}(w_{k+1}) \rVert + (\mu_k - \mu_{k+1}) \Bigl\lVert \begin{pmatrix} 0 & e^T & 0 \end{pmatrix}^T \Bigr\rVert = \fun{O}(\mu_k),
	\end{equation*}
	where the final equality is by~\eqref{eqn:primal-dual-termination} and the decreaseness and positivity of the sequence~$\{\mu_k\}_{k \in \mathbb{N}}$ of barrier parameters which implies $\mu_k - \mu_{k+1} = \fun{O}(\mu_k)$. Now that $\lVert r_{k+1} \rVert = \fun{O}(\mu_k)$, it follows that, for $k \in \mathcal{K}$ sufficiently large, $\mu_{k+1}$~and~$r_{k+1}$ are sufficiently small such that Corollary~\ref{thm:existence-unique-barrier-trajectories-scalar} provides a unique $(d - 1)$~times continuously differentiable function~$\fun{w^{w^*\!, \mu_{k+1}, r_{k+1}}}$ that satisfies~\eqref{eqn:implicit-function-barrier-scalar-trajectory}, such that $w^{w^*\!, p}_{k+1}$ is well defined. As
	\begin{equation*}
		\fun{F^{\mu_{k+1}}}(w_{k+1}) = r_{k+1} = \fun{F^{\mu_{k+1}}}\bigl(\fun{w^{w^*\!, \mu_{k+1}, r_{k+1}}}(\lVert r_{k+1} \rVert)\bigl)
	\end{equation*}
	and
	\begin{equation*}
		\lim_{k \in \mathcal{K} \to \infty} w_{k+1} = w^* = \lim_{k \in \mathcal{K} \to \infty} \fun{w^{w^*\!, \mu_{k+1}, r_{k+1}}}(\lVert r_{k+1} \rVert),
	\end{equation*}
	it follows from the uniqueness that, for $k \in \mathcal{K}$ sufficiently large, $w_{k+1}$ equals the expression in~\eqref{eqn:primal-dual-extrapolation-iteration}. Also using the relative magnitude of~$r_{k+1}$, \eqref{eqn:primal-dual-extrapolation-error} gives us that for all $j = 1, \ldots, n + m$,
	\begin{equation*}
		\bigl[\fun{w^{w^*}}(\mu_{k+1}) - w^{w^*\!, p}_{k+1}\bigr]_j = \fun{O}\bigl(\lVert r_{k+1} \rVert^{p+1}\bigr) = \fun{O}\bigl(\mu_k^{p+1}\bigr).
	\end{equation*}
	Moreover, since
	\begin{equation*}
		\mu_{k+1} = \fun{\Omega}\bigl(\mu_k^{p+\gamma}\bigr) \quad \Leftrightarrow \quad \mu_k^{p+\gamma} = \fun{O}(\mu_{k+1}) \quad \Leftrightarrow \quad \mu_k^{p+1} = \fun{O}\Bigl(\mu_{k+1}^{\frac{p+1}{p+\gamma}}\Bigr),
	\end{equation*}
	it follows, flipping the sign, that
	\begin{equation*}
		\bigl[w^{w^*\!, p}_{k+1} - \fun{w^{w^*}}(\mu_{k+1})\bigr]_j = \fun{O}\Bigl(\mu_{k+1}^\frac{p+1}{p+\gamma}\Bigr)
	\end{equation*}
	and since $p + 1 > p + \gamma$, we get that $\frac{p+1}{p+\gamma} > 1$, i.e., that the exponent is bigger than~$1$.

	Applying Taylor's theorem componentwise, we see that for all $j = 1, \ldots, n + m$,
	\begin{equation*}
		\bigl[\fun{F^{\mu_{k+1}}}\bigl(w^{w^*\!, p}_{k+1}\bigr)\bigl]_j = \bigl[\fun{F^{\mu_{k+1}}}\bigl(\fun{w^{w^*}}(\mu_{k+1})\bigr)\bigr]_j + \fun{O}\bigl(\bigl\lVert w^{w^*\!, p}_{k+1} - \fun{w^{w^*}}(\mu_{k+1}) \bigr\rVert\bigr) = \fun{O}\Bigl(\mu_{k+1}^\frac{p+1}{p+\gamma}\Bigr),
	\end{equation*}
	where the last equality is because $\fun{F^{\mu_{k+1}}}\bigl(\fun{w^{w^*}}(\mu_{k+1})\bigr) = 0$. This shows~\eqref{eqn:primal-dual-extrapolation-termination-norm}, since $\fun{\epsilon}(\mu_k) = \fun{\Omega}(\mu_k)$.

	We will now prove~\eqref{eqn:primal-dual-extrapolation-termination-positive}. Using Taylor's theorem, for all~$i \in \mathcal{I}$,
	\begin{equation*}
		\fun{c_i}\bigl(x^{w^*\!, p}_{k+1}\bigr) = \fun{c_i}\bigl(\fun{x^{w^*}}(\mu_{k+1})\bigr) + \fun{O}\bigl(\bigl\lVert x^{w^*\!, p}_{k+1} - \fun{x^{w^*}}(\mu_{k+1})\bigr\rVert\bigr) = \fun{c_i}\bigl(\fun{x^{w^*}}(\mu_{k+1})\bigr) + \fun{O}\Bigl(\mu_{k+1}^\frac{p+1}{p+\gamma}\Bigr),
	\end{equation*}
	as $c_i$ is continuously differentiable, and also
	\begin{equation*}
		\bigl[\lambda^{w^*\!, p}_{k+1}\bigr]_i = \bigl[\fun{\lambda^{w^*}}(\mu_{k+1})\bigr]_i + \fun{O}\bigl(\bigl\lVert \lambda^{w^*\!, p}_{k+1} - \fun{\lambda^{w^*}}(\mu_{k+1}) \bigr\rVert\bigr) = \bigl[\fun{\lambda^{w^*}}(\mu_{k+1})\bigr]_i + \fun{O}\Bigl(\mu_{k+1}^\frac{p+1}{p+\gamma}\Bigr).
	\end{equation*}
	We will here distinguish between the case for active and for inactive inequality constraints. First, let $i \in \mathcal{I} \cap \fun{\mathcal{A}}(x^*)$ be the index of an inequality constraint that is active at~$x^*$. By strict complementarity, $\bigl[\fun{\lambda^{w^*}}(0)\bigr]_i > 0$ and by a continuity argument, $\bigl[\fun{\lambda^{w^*}}(\mu_{k+1})\bigr]_i = \fun{\Theta}(1)$; since $\fun{c_i}\bigl(\fun{x^{w^*}}(\mu_{k+1})\bigr) \bigl[\fun{\lambda^{w^*}}(\mu_{k+1})\bigr]_i = \mu_{k+1}$, also $\fun{c_i}\bigl(\fun{x^{w^*}}(\mu_{k+1})\bigr) = \fun{\Theta}(\mu_{k+1})$. Now, let $i \in \mathcal{I} \setminus \fun{\mathcal{A}}(x^*)$; using the same reasoning, as $\fun{c_i}\bigl(\fun{x^{w^*}}(0)\bigr) > 0$, in this case $\fun{c_i}\bigl(\fun{x^{w^*}}(\mu_{k+1})\bigr) = \fun{\Theta}(1)$ and $\bigl[\fun{\lambda^{w^*}}(\mu_{k+1})\bigr]_i = \fun{\Theta}(\mu_{k+1})$. What is common between those cases, is that both $\fun{c_i}\bigl(\fun{x^{w^*}}(\mu_{k+1})\bigr)$~and~$\bigl[\fun{\lambda^{w^*}}(\mu_{k+1})\bigr]_i$ are strictly positive for all~$k \in \mathcal{K}$ sufficiently large and bounded below by a multiple of~$\mu_{k+1}$ with some exponent that is strictly smaller than that of the upper bound of its perturbation in the previous expression for $\fun{c_i}\bigl(x^{w^*\!, p}_{k+1}\bigr)$~and~$\bigl[\lambda^{w^*\!, p}_{k+1}\bigr]_i$. With that, we can asymptotically disregard the perturbation and conclude that those values are strictly positive too for all~$k \in \mathcal{K}$ sufficiently large, which concludes the the proof of~\eqref{eqn:primal-dual-extrapolation-termination-positive}.

	Lastly, we prove \eqref{eqn:primal-dual-extrapolation-distance-optimality-upper-bound} and \eqref{eqn:primal-dual-extrapolation-distance-optimality}. By Taylor's theorem, for all $j = 1, \ldots, n + m$,
	\begin{equation*}
		\bigl[\fun{w^{w^*}}(\mu_{k+1})\bigr]_j = \bigl[\fun{w^{w^*}}(0)\bigr]_j + \mu_{k+1} \bigl[\fun{\dot{w}^{w^*}}(0)\bigr]_j + \fun{O}(\mu_{k+1}^2),
	\end{equation*}
	from which it follows that $\bigl[\fun{w^{w^*}}(\mu_{k+1}) - \fun{w^{w^*}}(0)\bigr]_j = \fun{O}(\mu_{k+1})$ and for all~$j$ such that $\bigl[\fun{\dot{w}^{w^*}}(0)\bigr]_j \neq 0$, $\bigl[\fun{w^{w^*}}(\mu_{k+1}) - \fun{w^{w^*}}(0)\bigr]_j = \fun{\Theta}(\mu_{k+1})$. Using this, writing~$w^*$ as~$\fun{w^{w^*}}(0)$, we can see that for all~$j$,
	\begin{align*}
		\bigl[w^{w^*\!, p}_{k+1} - w^*\bigr]_j &= \bigl[w^{w^*\!, p}_{k+1} - \fun{w^{w^*}}(\mu_{k+1}) + \fun{w^{w^*}}(\mu_{k+1}) - \fun{w^{w^*}}(0)\bigr]_j \\
		&= \bigl[w^{w^*\!, p}_{k+1} - \fun{w^{w^*}}(\mu_{k+1})\bigr]_j + \bigl[\fun{w^{w^*}}(\mu_{k+1}) - \fun{w^{w^*}}(0)\bigr]_j \\
		&= \fun{O}\Bigl(\mu_{k+1}^\frac{p+1}{p+\gamma}\Bigr) + \fun{O}(\mu_{k+1}) = \fun{O}(\mu_{k+1}),
	\end{align*}
	and, repeating the argument, for all those~$j$ such that $\bigl[\fun{\dot{w}^{w^*}}(0)\bigr]_j \neq 0$,
	\begin{equation*}
		\bigl[w^{w^*\!, p}_{k+1} - w^*\bigr]_j = \fun{O}\Bigl(\mu_{k+1}^\frac{p+1}{p+\gamma}\Bigr) + \fun{\Theta}(\mu_{k+1}) = \fun{\Theta}(\mu_{k+1}),
	\end{equation*}
	which concludes the proof.
\end{proof}

\section{Computation of extrapolation step}
\label{ch:extrapolation-step-computation}

Having seen the effect of taking the extrapolation step on the minimization problem, this section concerns the computation of the step.

By the definition of the extrapolation step as Taylor-series approximation to~$\rho = 0$, introducing
\begin{equation*}
	\hat{w}_{k+1}^{w^*\!,q} \triangleq \frac{\diff^q \fun{w^{w^*\!,\mu_{k+1},r_{k+1}}}(\rho)}{\diff \rho^q}\biggr\rvert_{\rho = \lVert r_{k+1} \rVert} \cdot (0 - \lVert r_{k+1} \rVert)^q,
\end{equation*}
the step is given by $w^{w^*\!, p}_{k+1} = \sum_{q = 0}^p \frac{1}{q!} \hat{w}^{w^*\!, q}_{k+1}$, which is defined in terms of the derivatives of~$w^{w^*\!,\mu_{k+1},r_{k+1}}$. Differentiating the equivalence~\eqref{eqn:implicit-function-barrier-scalar-trajectory-definition} with respect to~$\rho$, we obtain
\begin{equation}
	\fun{J_{\fun{F}}}\bigl(w^{w^*\!,\mu,r}(\rho)\bigr) \frac{\diff \fun{w^{w^*\!,\mu,r}}(\rho)}{\diff \rho} \\
	= \operatorname{nml}(r),
	\label{eqn:implicit-definition-derivative}
\end{equation}
which allows us to obtain an expression for~$w^{w^*\!, p}_{k+1}$ in the case of $p = 1$.
\begin{proposition}
	\label{thm:w-1-newton}
	Under the assumptions of Lemma~\ref{thm:existence-unique-barrier-trajectories-scalar},including Assumption~\ref{asm:strict-complementarity}, let $\{\mu_k\}_{k \in \mathbb{N}}$ be a strictly decreasing sequence of positive scalars and let $\{w_k\}_{k \in \mathbb{N}}$ be a sequence of iterates fulfilling~\eqref{eqn:primal-dual-termination} such that there exists a subsequence indexed by~$\mathcal{K}$ for which $\{w_{k+1}\}_{k \in \mathcal{K}} \to w^*$.
	Then, for all $k \in \mathcal{K}$ sufficiently large,
	\begin{equation}
		w^{w^*\!, 1}_{k+1} = w_{k+1} - \fun{J_{\fun{F}}}(w_{k+1})^{-1} \fun{F^{\mu_{k+1}}}(w_{k+1}) \label{eqn:w-1-newton}
	\end{equation}
	and $w^{w^*\!, 1}_{k+1} - w_{k+1}$ is the Newton step for finding a root of~$\fun{F^{\mu_{k+1}}}$ at~$w_{k+1}$.
\end{proposition}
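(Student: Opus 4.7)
The plan is to expand the Taylor polynomial $w^{w^*\!, 1}_{k+1} = \hat{w}^{w^*\!, 0}_{k+1} + \hat{w}^{w^*\!, 1}_{k+1}$ using the definitions given for the $\hat{w}^{w^*\!, q}_{k+1}$, and then simplify the first derivative by solving~\eqref{eqn:implicit-definition-derivative}. First I would invoke Lemma~\ref{thm:primal-dual-higher-order-termination-criteria-fulfilling} to establish that, for $k \in \mathcal{K}$ sufficiently large, $w^{w^*\!, 1}_{k+1}$ is well defined and $w_{k+1}$ coincides with $\fun{w^{w^*\!, \mu_{k+1}, r_{k+1}}}(\lVert r_{k+1} \rVert)$. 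This gives immediately $\hat{w}^{w^*\!, 0}_{k+1} = w_{k+1}$.

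Next I would evaluate~\eqref{eqn:implicit-definition-derivative} at $\rho = \lVert r_{k+1} \rVert$, where the coefficient matrix becomes $\fun{J_{\fun{F}}}(w_{k+1})$ and the right-hand side is $\operatorname{nml}(r_{k+1})$. Invertibility of $\fun{J_{\fun{F}}}(w^*)$ (as used in the proof of Lemma~\ref{thm:existence-unique-barrier-trajectories}), combined with $\{w_{k+1}\}_{k \in \mathcal{K}} \to w^*$ and continuity of $\fun{J_{\fun{F}}}$, ensures that $\fun{J_{\fun{F}}}(w_{k+1})$ is invertible for all such $k$. Assuming $r_{k+1} \neq 0$, the scaling by $(0 - \lVert r_{k+1} \rVert)^1 = -\lVert r_{k+1} \rVert$ in the definition of $\hat{w}^{w^*\!, 1}_{k+1}$ exactly cancels the normalization in $\operatorname{nml}(r_{k+1}) = r_{k+1}/\lVert r_{k+1} \rVert$, yielding
\begin{equation*}
	\hat{w}^{w^*\!, 1}_{k+1} = -\fun{J_{\fun{F}}}(w_{k+1})^{-1} r_{k+1} = -\fun{J_{\fun{F}}}(w_{k+1})^{-1} \fun{F^{\mu_{k+1}}}(w_{k+1}),
\end{equation*}
where the last equality uses the definition $r_{k+1} = \fun{F^{\mu_{k+1}}}(w_{k+1})$. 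Summing the two terms produces~\eqref{eqn:w-1-newton}, which by inspection is the Newton step for $\fun{F^{\mu_{k+1}}}$ at $w_{k+1}$. The degenerate case $r_{k+1} = 0$ is handled separately: both $\operatorname{nml}(r_{k+1})$ and the scaling factor $-\lVert r_{k+1} \rVert$ vanish, so $\hat{w}^{w^*\!, 1}_{k+1} = 0$ and each side of~\eqref{eqn:w-1-newton} reduces to $w_{k+1}$.

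There is no deep obstacle; the argument is essentially bookkeeping, with the only subtlety being uniform invertibility of $\fun{J_{\fun{F}}}(w_{k+1})$ for large $k \in \mathcal{K}$, which is immediate from continuity.
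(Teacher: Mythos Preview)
Your proposal is correct and follows essentially the same argument as the paper: invoke Lemma~\ref{thm:primal-dual-higher-order-termination-criteria-fulfilling} to identify $w_{k+1}$ with $\fun{w^{w^*\!, \mu_{k+1}, r_{k+1}}}(\lVert r_{k+1} \rVert)$, then expand the first-order Taylor polynomial and substitute the derivative from~\eqref{eqn:implicit-definition-derivative}. Your explicit treatment of the invertibility of $\fun{J_{\fun{F}}}(w_{k+1})$ and of the degenerate case $r_{k+1} = 0$ are minor embellishments the paper leaves implicit, but the route is the same.
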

\begin{proof}
	By Lemma~\ref{thm:primal-dual-higher-order-termination-criteria-fulfilling}, for $k \in \mathcal{K}$ sufficiently large, $w^{w^*\!, p}_{k+1}$ is well defined and $w_{k+1} = \fun{w^{w^*\!, \mu_{k+1}, r_{k+1}}}(\lVert r_{k+1} \rVert)$. Writing out the expression obtained by definition of~$w^{w^*\!, 1}_{k+1}$ as first-order Taylor-series approximation and using~\eqref{eqn:implicit-definition-derivative}, we get
	\begin{align*}
		w^{w^*\!, 1}_{k+1} &= \fun{w^{w^*\!, \mu_{k+1}, r_{k+1}}}(\lVert r_{k+1} \rVert) + \frac{\diff \fun{w^{w^*\!,\mu_{k+1},r_{k+1}}}(\rho)}{\diff \rho}\biggr\rvert_{\rho = \lVert r_{k+1} \rVert} \cdot (0 - \lVert r_{k+1} \rVert) \\
		&= \fun{w^{w^*\!, \mu_{k+1}, r_{k+1}}}(\lVert r_{k+1} \rVert) - \fun{J_{\fun{F}}}\bigl(w^{w^*\!,\mu_{k+1},r_{k+1}}(\rho)\bigr)^{-1} \operatorname{nml}(r_{k+1}) \cdot \lVert r_{k+1} \rVert \\
		&= w_{k+1} - \fun{J_{\fun{F}}}(w_{k+1})^{-1} r_{k+1} \\
		&= w_{k+1} - \fun{J_{\fun{F}}}(w_{k+1})^{-1} \fun{F^{\mu_{k+1}}}(w_{k+1}),
	\end{align*}
	as desired.
\end{proof}

For affine equality constraints, the mechanism of satisfying those after a Newton step is also present for the extrapolation step, as demonstrated by the following proposition.
\begin{proposition}
	\label{thm:w-p-affine-equality}
	Under the assumptions of Lemma~\ref{thm:existence-unique-barrier-trajectories-scalar}, including Assumption~\ref{asm:strict-complementarity}, let $\{\mu_k\}_{k \in \mathbb{N}}$ be a strictly decreasing sequence of positive scalars and let $\{w_k\}_{k \in \mathbb{N}}$ be a sequence of iterates fulfilling~\eqref{eqn:primal-dual-termination} such that there exists a subsequence indexed by~$\mathcal{K}$ for which $\{w_{k+1}\}_{k \in \mathcal{K}} \to w^*$. Furthermore, let $p \in \{1, \ldots, d - 2\}$. Let~$\mathcal{E}_{\mathrm{A}} \subseteq \mathcal{E}$ such that there exists an $A_{\mathcal{E}_{\mathrm{A}}} \in \mathbb{R}^{\lvert \mathcal{E}_{\mathrm{A}} \rvert \times n}$ such that $A_{\mathcal{E}_{\mathrm{A}}} \equiv \fun{A_{\mathcal{E}_{\mathrm{A}}}}(x)$, i.e,., that \eqref{eqn:nlp} describes a problem with the constraints indexed by~$\mathcal{E}_{\mathrm{A}}$ being affine equality constraints. Then, for all $k \in \mathcal{K}$ sufficiently large,
	\begin{equation*}
		\fun{c_{\mathcal{E}_{\mathrm{A}}}}\bigl(x^{w^*\!, p}_{k+1}\bigr) = 0.
	\end{equation*}
\end{proposition}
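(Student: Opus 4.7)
The plan is to exploit the fact that the extrapolation step is a $p$th-order Taylor expansion and combine this with the affine nature of $c_{\mathcal{E}_{\mathrm{A}}}$ to reduce the claim to reading off a single coordinate of the trajectory identity~\eqref{eqn:implicit-function-barrier-scalar-trajectory-definition}. First, I would invoke Lemma~\ref{thm:primal-dual-higher-order-termination-criteria-fulfilling} to guarantee that, for $k \in \mathcal{K}$ sufficiently large, $w^{w^*\!, p}_{k+1}$ is well defined and that $w_{k+1} = \fun{w^{w^*\!, \mu_{k+1}, r_{k+1}}}(\lVert r_{k+1} \rVert)$, so that we may freely use the function~$\fun{w^{w^*\!, \mu_{k+1}, r_{k+1}}}$ supplied by Corollary~\ref{thm:existence-unique-barrier-trajectories-scalar}.

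Next I would isolate the block of rows of~$\fun{F^{\mu_{k+1}}}$ indexed by~$\mathcal{E}_{\mathrm{A}}$. Since these rows consist precisely of~$\fun{c_{\mathcal{E}_{\mathrm{A}}}}(x)$, the identity $\fun{F^{\mu_{k+1}}}\bigl(\fun{w^{w^*\!, \mu_{k+1}, r_{k+1}}}(\rho)\bigr) = \rho \operatorname{nml}(r_{k+1})$ restricted to these rows reads
\begin{equation*}
    \fun{c_{\mathcal{E}_{\mathrm{A}}}}\bigl(\fun{x^{w^*\!, \mu_{k+1}, r_{k+1}}}(\rho)\bigr) = \rho \bigl[\operatorname{nml}(r_{k+1})\bigr]_{\mathcal{E}_{\mathrm{A}}},
\end{equation*}
where I use the excerpt's row-selection notation. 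Under the affineness assumption, $\fun{c_{\mathcal{E}_{\mathrm{A}}}}(x) = A_{\mathcal{E}_{\mathrm{A}}} x + b_{\mathcal{E}_{\mathrm{A}}}$ for some constant~$b_{\mathcal{E}_{\mathrm{A}}}$, so the left-hand side, viewed as a function of~$\rho$ alone, is affine, matching the right-hand side which is already affine in~$\rho$.

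The key step is then to apply~$\fun{c_{\mathcal{E}_{\mathrm{A}}}}$ to the $p$th-order expansion~$x^{w^*\!, p}_{k+1} = \sum_{q=0}^p \tfrac{1}{q!} \hat{x}^{w^*\!,q}_{k+1}$. Because $\fun{c_{\mathcal{E}_{\mathrm{A}}}}(x) = A_{\mathcal{E}_{\mathrm{A}}} x + b_{\mathcal{E}_{\mathrm{A}}}$, absorbing $A_{\mathcal{E}_{\mathrm{A}}}$ inside the sum and noting that chain-rule differentiation of the composition $\rho \mapsto \fun{c_{\mathcal{E}_{\mathrm{A}}}}\bigl(\fun{x^{w^*\!, \mu_{k+1}, r_{k+1}}}(\rho)\bigr)$ simply produces $A_{\mathcal{E}_{\mathrm{A}}}$ times the corresponding derivative of~$\fun{x^{w^*\!, \mu_{k+1}, r_{k+1}}}$ for every order~$q \geq 1$, we get that $\fun{c_{\mathcal{E}_{\mathrm{A}}}}\bigl(x^{w^*\!, p}_{k+1}\bigr)$ equals the $p$th-order Taylor expansion at~$\rho = \lVert r_{k+1} \rVert$ of $\rho \mapsto \fun{c_{\mathcal{E}_{\mathrm{A}}}}\bigl(\fun{x^{w^*\!, \mu_{k+1}, r_{k+1}}}(\rho)\bigr)$ evaluated at~$\rho = 0$. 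But this scalar-in, vector-out map is affine in~$\rho$ by the previous paragraph, so its Taylor expansion of any order $p \geq 1$ is exact everywhere, and the evaluation at~$\rho = 0$ therefore equals its exact value at~$\rho = 0$, which is $0$ by inspection.

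There is no real obstacle; the only care needed is the bookkeeping that applying the affine map~$\fun{c_{\mathcal{E}_{\mathrm{A}}}}$ commutes (up to absorbing the constant into the zeroth-order term) with taking a truncated Taylor polynomial, so that taking $c_{\mathcal{E}_{\mathrm{A}}}$ of the Taylor expansion in $x$ equals the Taylor expansion of $c_{\mathcal{E}_{\mathrm{A}}} \circ x^{w^*\!,\mu_{k+1},r_{k+1}}$. This is where the requirement $p \geq 1$ from the hypothesis $p \in \{1, \ldots, d-2\}$ matters, as we need to include the linear term to cancel the non-trivial zeroth-order residual $[r_{k+1}]_{\mathcal{E}_{\mathrm{A}}}$.
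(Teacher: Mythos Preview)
Your proposal is correct and follows essentially the same approach as the paper: both arguments hinge on the observation that the $\mathcal{E}_{\mathrm{A}}$-block of the trajectory identity is affine in~$\rho$, so the $p$th-order Taylor polynomial reproduces it exactly and its value at $\rho=0$ is~$0$. The paper phrases this by first showing the higher-order terms $A_{\mathcal{E}_{\mathrm{A}}}\hat{x}^{w^*\!,q}_{k+1}$ vanish for $q\ge 2$ (reducing to the $p=1$ Newton case from Proposition~\ref{thm:w-1-newton}), whereas you read the affine identity directly from~\eqref{eqn:implicit-function-barrier-scalar-trajectory-definition} and argue that the Taylor expansion of an affine map is exact; the content is the same.
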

\begin{proof}
	By~\eqref{eqn:implicit-definition-derivative}, $A_{\mathcal{E}_{\mathrm{A}}} \frac{\diff \fun{x^{w^*\!,\mu,r}}(\rho)}{\diff \rho}$ is equivalent to an expression constant in~$\rho$ and thus, for all~$q \geq 2$, $A_{\mathcal{E}_{\mathrm{A}}} \frac{\diff^q \fun{x^{w^*\!,\mu,r}}(\rho)}{\diff \rho^q} \equiv 0$. Therefore, $A_{\mathcal{E}_{\mathrm{A}}} x^{w^*\!, p}_{k+1} = A_{\mathcal{E}_{\mathrm{A}}} x^{w^*\!, 1}_{k+1}$ and as the first-order Taylor-series approximation of an affine function is perfect,
	\begin{align*}
		\fun{c_{\mathcal{E}_{\mathrm{A}}}}\bigl(x^{w^*\!, p}_{k+1}\bigr) &= \fun{c_{\mathcal{E}_{\mathrm{A}}}}(x_{k+1}) + A_{\mathcal{E}_{\mathrm{A}}} \bigl(x^{w^*\!, p}_{k+1} - x_{k+1}\bigr) = \fun{c_{\mathcal{E}_{\mathrm{A}}}}(x_{k+1}) + A_{\mathcal{E}_{\mathrm{A}}} \bigl(x^{w^*\!, 1}_{k+1} - x_{k+1}\bigr) \\
		&= \fun{c_{\mathcal{E}_{\mathrm{A}}}}(x_{k+1}) - \fun{c_{\mathcal{E}_{\mathrm{A}}}}(x_{k+1}) = 0,
	\end{align*}
	as desired.
\end{proof}

It can be observed that the expression for~$w^{w^*\!, 1}_{k+1}$ obtained in~\eqref{eqn:w-1-newton} does not actually depend on~$w^*$ and that the expression can be evaluated for all~$k$ and not only for $k \in \mathcal{K}$ sufficiently large -- as long as $\fun{J_{\fun{F}}}(w_{k+1})$ is invertible. Consequently, we can define $w^1_{k+1}$ through
\begin{equation*}
	w^1_{k+1} \triangleq w_{k+1} - \fun{J_{\fun{F}}}(w_{k+1})^{-1} \fun{F^{\mu_{k+1}}}(w_{k+1}),
\end{equation*}
an expression that can be evaluated if $\fun{J_{\fun{F}}}(w_{k+1})$ is invertible and that equals~$w^{w^*\!, 1}_{k+1}$ under the assumptions of Proposition~\ref{thm:w-1-newton} for $k \in \mathcal{K}$ sufficiently large. In fact, such generalization of~$w^{w^*\!, p}_{k+1}$ can be obtained for all orders of extrapolation~$p$: \eqref{eqn:implicit-function-barrier-scalar-trajectory-definition} used to obtain the derivatives does not depend on~$w^*$ and the unknown function~$\fun{w^{w^*\!, \mu_{k+1}, r_{k+1}}}$ is only evaluated at~$(\lVert r_{k+1} \rVert)$, for which the function value can be replaced by~$w_{k+1}$ by Lemma~\ref{thm:primal-dual-higher-order-termination-criteria-fulfilling}. Similarly, we define $w^p_{k+1}$ to be equal to the expression for $w^{w^*\!, p}_{k+1}$ with no other references to~$\fun{w^{w^*\!, \mu_{k+1}, r_{k+1}}}$ present than those through~$\fun{w^{w^*\!, \mu_{k+1}, r_{k+1}}}(\lVert r_{k+1} \rVert)$, and with this expression replaced by~$w_{k+1}$ -- for an expression that is independent of~$w^*$; also in this case, $w^p_{k+1}$ exists only exactly if $\fun{J_{\fun{F}}}(w_{k+1})$ is invertible, as is the case for $k \in \mathcal{K}$ sufficiently large. Also, the terms of the Taylor-series approximation for successive values of~$q$ can be computed as the solution of a linear system with the same coefficient matrix~$\fun{J_{\fun{F}}}(w_{k+1})$, but with different right-hand sides, of in general increasing complexity.

As an example, we will derive the necessary formulas for computing the extrapolation step in case of a quadratic programming problem.
\begin{proposition}
	Assume that there exist $H \in \mathbb{R}^{n \times n}$, $A_\mathcal{I} \in \mathbb{R}^{m_\mathcal{I} \times n}$ and $A_\mathcal{E} \in \mathbb{R}^{m_\mathcal{E} \times n}$ such that $H \equiv \fun{H}(x, \lambda)$, $A_\mathcal{I} \equiv \fun{A_\mathcal{I}}(x)$ and $A_\mathcal{E} \equiv \fun{A_\mathcal{E}}(x)$, i.e., that \eqref{eqn:nlp} describes a problem with a quadratic objective function and affine inequality and equality constraints. Then, for all $q \geq 1$,
	\begin{equation}
		\fun{J_{\fun{F}}}\bigl(w_{k+1}\bigr) \hat{w}_{k+1}^{w^*\!, q+1} = - \sum_{i = 1}^q \binom{q + 1}{i} \begin{pmatrix} 0 \\ \bigl[\hat{\Lambda}_{k+1}^{w^*\!,i}\bigr]_{\mathcal{I}} A_{\mathcal{I}} \hat{x}_{k+1}^{w^*\!,i} \\ 0 \end{pmatrix}.
		\label{eqn:w-hat-qp}
	\end{equation}
	\label{thm:w-hat-qp}
\end{proposition}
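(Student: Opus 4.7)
The plan is to obtain the recurrence by differentiating the implicit relation~\eqref{eqn:implicit-function-barrier-scalar-trajectory-definition} defining~$\fun{w^{w^*\!,\mu_{k+1},r_{k+1}}}$ exactly $q+1$ times with respect to the scalar parameter~$\rho$, evaluating at $\rho = \lVert r_{k+1} \rVert$, and then multiplying through by~$(-\lVert r_{k+1}\rVert)^{q+1}$ so that raw derivatives are converted into the hatted quantities of the proposition. What keeps the computation finite is the quadratic programming structure: outside the complementarity rows, both $\fun{F^{\mu_{k+1}}}$ and the Jacobian $\fun{J_{\fun{F}}}$ have constant entries determined by $H$, $A_{\mathcal{I}}$ and $A_{\mathcal{E}}$, so the only nonlinearity in $\fun{F^{\mu_{k+1}}}$ itself lies in the bilinear products $[c_{\mathcal{I}}(x)]_i[\lambda]_i$ for $i\in\mathcal{I}$.

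First, I would invoke Lemma~\ref{thm:primal-dual-higher-order-termination-criteria-fulfilling} together with Corollary~\ref{thm:existence-unique-barrier-trajectories-scalar} to guarantee that for $k\in\mathcal{K}$ sufficiently large the trajectory function $w(\rho) \triangleq \fun{w^{w^*\!,\mu_{k+1},r_{k+1}}}(\rho)$ is at least $(q+1)$-times continuously differentiable (since $q+1 \leq d-1$) in a neighborhood of $\rho = \lVert r_{k+1}\rVert$, and takes value~$w_{k+1}$ at that point. Differentiating $\fun{F^{\mu_{k+1}}}(w(\rho)) = \rho\,\operatorname{nml}(r_{k+1})$ a total of $q+1 \geq 2$ times eliminates the right-hand side, leaving an identity to zero. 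Splitting this identity into the three blocks of~$\fun{F^{\mu_{k+1}}}$, the stationarity block $\fun{g}(x) - \fun{A}(x)^T\lambda$ and the equality block $\fun{c_{\mathcal{E}}}(x)$ are affine in~$w$, so their $(q+1)$th derivatives contribute only the corresponding rows of $\fun{J_{\fun{F}}}(w(\rho))\,w^{(q+1)}(\rho)$ with no cross terms. After scaling by $(-\lVert r_{k+1}\rVert)^{q+1}$, these will agree with the first and third blocks of $\fun{J_{\fun{F}}}(w_{k+1})\,\hat w^{w^*\!,q+1}_{k+1}$ and match the zero rows on the right-hand side of~\eqref{eqn:w-hat-qp}.

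The middle block is where the Leibniz product rule does the real work. Since $[c_{\mathcal{I}}(x(\rho))]_i$ is affine, its $j$th $\rho$-derivative equals $[A_{\mathcal{I}} x^{(j)}(\rho)]_i$ for $j\geq 1$ and $[c_{\mathcal{I}}(x(\rho))]_i$ for $j=0$; Leibniz then yields
\begin{align*}
	\tfrac{d^{q+1}}{d\rho^{q+1}}\bigl([c_{\mathcal{I}}(x(\rho))]_i[\lambda(\rho)]_i\bigr) &= [c_{\mathcal{I}}(x(\rho))]_i[\lambda^{(q+1)}(\rho)]_i + [\lambda(\rho)]_i[A_{\mathcal{I}} x^{(q+1)}(\rho)]_i \\
	&\quad + \sum_{k=1}^{q}\binom{q+1}{k}[A_{\mathcal{I}} x^{(k)}(\rho)]_i[\lambda^{(q+1-k)}(\rho)]_i.
\end{align*}
The first two terms on the right assemble, as $i$ ranges over~$\mathcal{I}$, into precisely the middle block of $\fun{J_{\fun{F}}}(w(\rho))\,w^{(q+1)}(\rho)$; the remaining sum is moved to the right-hand side as the cross-term contribution.

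Finally, I would evaluate at $\rho = \lVert r_{k+1}\rVert$ so that $w(\rho) = w_{k+1}$, and multiply the resulting identity through by $(-\lVert r_{k+1}\rVert)^{q+1}$. Splitting this factor as $(-\lVert r_{k+1}\rVert)^{k}(-\lVert r_{k+1}\rVert)^{q+1-k}$ inside the $k$th summand converts $x^{(k)}$ into $\hat x^{w^*\!,k}_{k+1}$ and $\lambda^{(q+1-k)}$ into $\hat\lambda^{w^*\!,q+1-k}_{k+1}$, while the left-hand side becomes $\fun{J_{\fun{F}}}(w_{k+1})\,\hat w^{w^*\!,q+1}_{k+1}$. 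Repackaging the $\mathcal{I}$ components as the diagonal matrix $\hat\Lambda^{w^*\!,\,\cdot}_{k+1}$ and using the binomial symmetry $\binom{q+1}{k}=\binom{q+1}{q+1-k}$ to reindex the summation then produces the form stated in~\eqref{eqn:w-hat-qp}. The main obstacle I anticipate is the bookkeeping in the Leibniz expansion: cleanly isolating the two boundary terms that together comprise the middle block of $\fun{J_{\fun{F}}}(w_{k+1})\,\hat w^{w^*\!,q+1}_{k+1}$ from the cross-term sum, and correctly partitioning the scaling factor $(-\lVert r_{k+1}\rVert)^{q+1}$ between the two hatted factors in each summand.
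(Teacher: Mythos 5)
Your proposal is correct and follows essentially the same route as the paper: both differentiate the implicit relation defining the trajectory, use the affinity of the stationarity and equality blocks under the quadratic programming structure, apply the general Leibniz rule to the bilinear complementarity block, move the cross terms to the right-hand side, and then evaluate at $\rho = \lVert r_{k+1}\rVert$ and rescale to convert raw derivatives into the hatted quantities. The only cosmetic difference is that the paper first rewrites the middle block via the diagonal-swap identity before differentiating the first-order relation $q$ more times, whereas you differentiate the original relation $q+1$ times componentwise; the bookkeeping is identical.
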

\begin{proof}
	Writing out the block rows of~\eqref{eqn:implicit-definition-derivative}, we can see that the constant~$\operatorname{nml}(r)$ equals
	\begin{equation*}
		\begin{pmatrix}
			H \frac{\diff \fun{x^{w^*\!,\mu,r}}(\rho)}{\diff \rho} - A \frac{\diff \fun{\lambda^{w^*\!,\mu,r}}(\rho)}{\diff \rho} \\
			\bigl[\fun{\Lambda^{w^*\!,\mu,r}}(\rho)\bigr]_{\mathcal{I}} A_{\mathcal{I}} \frac{\diff \fun{x^{w^*\!,\mu,r}}(\rho)}{\diff \rho} + \frac{\diff \bigl[\fun{\Lambda^{w^*\!,\mu,r}}(\rho)\bigr]_{\mathcal{I}}}{\diff \rho} A_{\mathcal{I}} \fun{x^{w^*\!,\mu,r}}(\rho) + \frac{\diff \bigl[\fun{\Lambda^{w^*\!,\mu,r}}(\rho)\bigr]_{\mathcal{I}}}{\diff \rho} \fun{c_{\mathcal{I}}}(0) \\
			A_\mathcal{E} \frac{\diff \fun{x^{w^*\!,\mu,r}}(\rho)}{\diff \rho}
		\end{pmatrix},
	\end{equation*}
	for which it is used that the first-order Taylor-series approximation for affine functions is exact and that the the role of the two vectors in the product of a diagonalized vector and a vector can be switched through
	\begin{align}
		\begin{split}
		\fun{C_{\mathcal{I}}}(\fun{x^{w^*\!,\mu,r}}(\rho)) \frac{\diff \bigl[\fun{\lambda^{w^*\!,\mu,r}}(\rho)\bigr]_{\mathcal{I}}}{\diff \rho} &= \frac{\diff \bigl[\fun{\Lambda^{w^*\!,\mu,r}}(\rho)\bigr]_{\mathcal{I}}}{\diff \rho} \fun{c_{\mathcal{I}}}(\fun{x^{w^*\!,\mu,r}}(\rho)) \\
		&= \frac{\diff \bigl[\fun{\Lambda^{w^*\!,\mu,r}}(\rho)\bigr]_{\mathcal{I}}}{\diff \rho} \bigl(\fun{c_{\mathcal{I}}}(0) + A_{\mathcal{I}} \fun{x^{w^*\!,\mu,r}}(\rho)\bigr).
		\end{split} \label{eqn:implicit-definition-derivative-c-lambda-rewrite}
	\end{align}
	To obtain the higher-order derivatives, we can note that the first two terms in the second block component equal
	\begin{equation*}
		\frac{\diff \bigl[\fun{\Lambda^{w^*\!,\mu,r}}(\rho)\bigr]_{\mathcal{I}} A_{\mathcal{I}} \fun{x^{w^*\!,\mu,r}}(\rho)}{\diff \rho},
	\end{equation*}
	to which the general Leibniz rule can be applied. Moving all but the first and last term of the resulting sum to the other side and using~\eqref{eqn:implicit-definition-derivative-c-lambda-rewrite} in the other direction, we obtain
	\begin{align*}
		\begin{split}
		&\mathopen{}\fun{J_{\fun{F}}}\bigl(\fun{w^{w^*\!,\mu,r}}(\rho)\bigr) \frac{\diff^{(q+1)} \fun{w^{w^*\!,\mu,r}}(\rho)}{\diff \rho^{(q+1)}} \\
		&\quad= - \sum_{i = 1}^q \binom{q + 1}{i} \begin{pmatrix} 0 \\ \frac{\diff^{(q+1-i)} \bigl[\fun{\Lambda^{w^*\!,\mu,r}}(\rho)\bigr]_{\mathcal{I}}}{\diff \rho^{(q+1-i)}} A_{\mathcal{I}} \frac{\diff^i \fun{x^{w^*\!,\mu,r}}(\rho)}{\diff \rho^i} \\ 0 \end{pmatrix}.
		\end{split}
	\end{align*}
	Setting $\mu = \mu_{k+1}$, $r = r_{k+1}$, $\rho = \lVert r_{k+1} \rVert$ and multiplying both sides with~$\lVert r_{k+1} \rVert^{q+1}$ and distributing this on the right-hand side according to the degree of differentiation, we obtain the desired relation.
\end{proof}

In \cite{Car05},~\cite{EV22} and~\cite{EV24} for linear, quadratic and general nonlinear programming problems respectively, when an extrapolation step is found not to be feasible to the implicit constraints, steps are defined that are equivalent to steps obtained by (partially) extrapolating to~$\rho = (1 - \theta) \lVert r_{k+1} \rVert$ instead of~$\rho = 1$ for $\theta \in [0, 1]$, where a (full) extrapolation step is obtained for $\theta = 1$. Considering the effect on the step size in the terms of the Taylor-series approximation, as
\begin{equation*}
	(1 - \theta) \lVert r_{k+1} \rVert - \lVert r_{k+1} \rVert = - \theta \lVert r_{k+1} \rVert
\end{equation*}
the point~$\fun{w_{k+1}^p}(\theta)$ resulting from taking a partial extrapolation step of order~$p$ can be obtained by scaling each~$\hat{w}_{k+1}^q$ with~$\theta^q$. Explicitly computing this step for $p = 2$ using the definition $\tilde{w}_{k+1}^q = \hat{w}_{k+1}^q / q!$, we get by \eqref{eqn:implicit-definition-derivative}~and~\eqref{eqn:w-hat-qp},
\begin{align*}
	\fun{J_{\fun{F}}}(w_{k+1}) \tilde{w}_{k+1}^1 &= 1/1 \cdot \fun{J_{\fun{F}}}(w_{k+1}) \hat{w}_{k+1}^1 = \operatorname{nml}(r_{k+1}) \cdot -\lVert r_{k+1} \rVert = -r_{k+1} \\
	&= -\fun{F^{\mu_{k+1}}}(w_{k+1}) \quad \text{and} \\
	\fun{J_{\fun{F}}}(w_{k+1}) \hat{w}_{k+1}^2 &= 1/2 \cdot - 2 \begin{pmatrix} 0 \\ \bigl[\hat{\Lambda}_{k+1}^1\bigr]_{\mathcal{I}} A_{\mathcal{I}} \hat{x}_{k+1}^1 \\ 0 \end{pmatrix} = - \begin{pmatrix} 0 \\ \bigl[\hat{\Lambda}_{k+1}^1\bigr]_{\mathcal{I}} A_{\mathcal{I}} \hat{x}_{k+1}^1 \\ 0 \end{pmatrix}
\end{align*}
and $\fun{w_{k+1}^2}(\theta) = w_{k+1} + \theta \tilde{w}_{k+1}^1 + \theta^2 \tilde{w}_{k+1}^2$. A variant can be obtained by scaling the extrapolation step with the same factor for all terms, to get in this setting $w_{k+1} + \theta \tilde{w}_{k+1}^1 + \theta \tilde{w}_{k+1}^2$ as next point as function of~$\theta$. An iterative algorithm taking at every iteration such a step while setting the barrier parameter to the mean complementarity has been shown in~\cite{Car09} not to be globally convergent for linear programming problems; the similarity with the Mehrotra predictor--corrector algorithm from~\cite{Meh91a} has been noted with the hope to gain understanding of the latter by studying the first. This resulted in the study in~\cite{CG08} of a variation on the Mehrotra predictor--corrector algorithm using multiple centrality correctors that uses different scalings for the different terms computed

\section{Local convergence of extrapolation step}
\label{ch:extrapolation-step-local-convergence}

With the extrapolation step stated, asymptotic properties of it derived and a general way of computing defined, in this section, local convergence of an algorithm taking extrapolation steps will be shown.

To analyze this, we will define the following algorithm in which an extrapolation step is always taken if such step is defined after a decrease of the barrier parameter and complemented if necessary by an inner minimization algorithm as Newton's method to find a point that fulfills the termination criteria.
\begin{myalgorithm}{Extrapolation primal--dual interior-point method}
	\label{alg:ipm}
	\begin{enumerate}[font=\upshape,leftmargin=*]
		\item
			Input: let $p \in \{1, \ldots, d - 2\}$, $\kappa \in (1, p + 1)$ and $\fun{\epsilon}$~and~$\fun{\varphi}$ be positive functions such that $\fun{\epsilon}(\mu_k) = \fun{\Theta}(\mu_k)$ and $\fun{\varphi}(\mu_k) = \fun{\Theta}(\mu_k^{\kappa})$. Choose $(x_0, \lambda_0) \in \mathbb{R}^{(n+m)}$ and $\mu_0 > 0$.

		\item
			Initialization: set the iteration index $k = 0$.

		\item
			Iteration: if $\fun{J_{\fun{F}}}(x_k, \lambda_k)$ is invertible, set $\bigl(\bar{x}_k, \bar{\lambda}_k\bigr) = \bigl(x^p_k, \lambda^p_k\bigr)$; otherwise, set $\bigl(\bar{x}_k, \bar{\lambda}_k\bigr) = (x_k, \lambda_k)$. Apply, if needed, an inner minimization method starting at~$\bigl(\bar{x}_k, \bar{\lambda}_k\bigr)$ for minimizing~\eqref{eqn:nlp} with complementarity perturbed by~$\mu_k$ until a point $(x_{k+1}, \lambda_{k+1})$ is found that fulfills~\eqref{eqn:primal-dual-termination}, i.e.,
			\begin{equation*}
				\lVert \fun{F^{\mu_k}}(w_{k+1}) \rVert \leq \fun{\epsilon}(\mu_k).
			\end{equation*}
			If a stopping criterion is not yet met, set $\mu_{k+1} = \fun{\varphi}(\mu_k)$, increment~$k$ with one and continue with a new iteration.

		\item
			Output: $(x_{k+1}, \lambda_{k+1})$ fulfilling a stopping criterion.
	\end{enumerate}
\end{myalgorithm}

The following theorem establishes convergence theory for this algorithm. It parallels Theorem~6.5 in~\cite{GOST01} for the case of $p = 1$ where the extrapolation step equals the Newton step and it shows a choice of parameters resulting in local convergence for the algorithm presented in~\cite{EV24} with convergence starting at a point close enough to the barrier trajectory for a barrier parameter that is sufficiently small.
\begin{theorem}
	\label{thm:extrapolation-convergence}
	Under the assumptions of Lemma~\ref{thm:existence-unique-barrier-trajectories-scalar}, including Assumption~\ref{asm:strict-complementarity}, let $\{w_k\}_{k \in \mathbb{N}}$ be a sequence of iterates generated by Algorithm~\ref{alg:ipm} without a stopping criterion such that there exists a subsequence indexed by~$\mathcal{K}$ for which $\{w_{k+1}\}_{k \in \mathcal{K}} \to w^*$. Then, the whole sequence of iterates~$\{w_k\}_{k \in \mathbb{N}}$ converges to~$w^*$ with ultimately no need for usage of the inner minimization method with componentwise R-convergence of order~$\kappa$ and componentwise Q-convergence of order~$\kappa$ for those components~$j$ for which $\bigl[\fun{\dot{w}^{w^*}}(0)\bigr]_j \neq 0$.
\end{theorem}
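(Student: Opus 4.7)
The plan is to apply Lemma~\ref{thm:primal-dual-higher-order-termination-criteria-fulfilling} inductively along the algorithm's iterates. As a preliminary, I verify the lemma's quantitative hypothesis $\mu_{k+1} = \fun{\Omega}\bigl(\mu_k^{p+\gamma}\bigr)$ for some $\gamma \in (0,1)$: since $\mu_{k+1} = \fun{\varphi}(\mu_k) = \fun{\Theta}\bigl(\mu_k^\kappa\bigr)$ with $\kappa \in (1, p+1)$, we have $\kappa - p < 1$, so any $\gamma \in (0,1)$ with $\gamma \geq \max\{0, \kappa - p\}$ satisfies $\mu_k^{p+\gamma} = \fun{O}\bigl(\mu_k^\kappa\bigr) = \fun{O}(\mu_{k+1})$ as $\mu_k \to 0$.

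For $k \in \mathcal{K}$ sufficiently large, Lemma~\ref{thm:primal-dual-higher-order-termination-criteria-fulfilling} applies to $w_{k+1}$: it yields that $w^{w^*\!,p}_{k+1}$ is well defined, whence $\fun{J_{\fun{F}}}(w_{k+1})$ is invertible and the algorithm's extrapolation iterate $w^p_{k+1}$ at iteration $k+1$ coincides with $w^{w^*\!,p}_{k+1}$ by the discussion following Proposition~\ref{thm:w-1-newton}. Part~\eqref{eqn:primal-dual-extrapolation-termination} of the lemma then shows that this point already fulfills the termination criterion $\lVert\fun{F^{\mu_{k+1}}}(w^p_{k+1})\rVert \leq \fun{\epsilon}(\mu_{k+1})$ and the implicit positivity constraint, so the algorithm requires no inner minimization at iteration $k+1$ and sets $w_{k+2} = w^p_{k+1}$. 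Part~\eqref{eqn:primal-dual-extrapolation-distance-optimality-upper-bound} further gives $\bigl[w_{k+2} - w^*\bigr]_j = \fun{O}(\mu_{k+1}) \to 0$, so $w_{k+2} \to w^*$ as well. Thus $k+1$ may be adjoined to $\mathcal{K}$; iterating, there is an index $K$ such that for all $k \geq K$ every iteration consists solely of the extrapolation step, the entire tail $\{w_k\}_{k \geq K}$ converges to $w^*$, and the componentwise estimates~\eqref{eqn:primal-dual-extrapolation-distance-optimality-upper-bound}--\eqref{eqn:primal-dual-extrapolation-distance-optimality} hold at every index beyond~$K$.

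The convergence rates now follow by combining these componentwise bounds with the recursion $\mu_{k+1} = \fun{\Theta}\bigl(\mu_k^\kappa\bigr)$. The latter makes $\{\mu_k\}$ a sequence Q-converging to zero of order $\kappa$. Since $\bigl[w_{k+1} - w^*\bigr]_j = \fun{O}(\mu_k)$ for every component~$j$, each component is dominated by a Q-order-$\kappa$ sequence, which is the defining property of componentwise R-convergence of order $\kappa$. For those components with $\bigl[\fun{\dot{w}^{w^*}}(0)\bigr]_j \neq 0$, the sharper bound~\eqref{eqn:primal-dual-extrapolation-distance-optimality} yields $\bigl[w_{k+1} - w^*\bigr]_j = \fun{\Theta}(\mu_k)$, and therefore
\begin{equation*}
\frac{\bigl\lvert [w_{k+2} - w^*]_j \bigr\rvert}{\bigl\lvert [w_{k+1} - w^*]_j \bigr\rvert^\kappa} = \frac{\fun{\Theta}(\mu_{k+1})}{\fun{\Theta}\bigl(\mu_k^\kappa\bigr)} = \fun{\Theta}(1),
\end{equation*}
so the ratio stays bounded, giving componentwise Q-convergence of order $\kappa$.

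The main delicate point is closing the induction: the conclusions of Lemma~\ref{thm:primal-dual-higher-order-termination-criteria-fulfilling} at one index must reproduce its hypotheses at the next, namely that the iterate fulfills~\eqref{eqn:primal-dual-termination} and lies close enough to $w^*$ to be in the neighborhood of Corollary~\ref{thm:existence-unique-barrier-trajectories-scalar} where $\fun{J_{\fun{F}}}$ remains invertible. These two conditions are supplied precisely by~\eqref{eqn:primal-dual-extrapolation-termination-norm} and~\eqref{eqn:primal-dual-extrapolation-distance-optimality-upper-bound}, so the induction closes and the theorem follows.
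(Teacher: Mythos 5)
Your proposal is correct and follows essentially the same route as the paper: apply Lemma~\ref{thm:primal-dual-higher-order-termination-criteria-fulfilling} along the convergent subsequence, observe that the extrapolation step satisfies the termination criterion so $w_{k+2} = w^p_{k+1}$, close the induction, and read off the R- and Q-rates from~\eqref{eqn:primal-dual-extrapolation-distance-optimality-upper-bound} and~\eqref{eqn:primal-dual-extrapolation-distance-optimality}. Your explicit verification that $\fun{\varphi}(\mu_k) = \fun{\Theta}(\mu_k^{\kappa})$ with $\kappa \in (1, p+1)$ supplies the lemma's hypothesis $\mu_{k+1} = \fun{\Omega}\bigl(\mu_k^{p+\gamma}\bigr)$ for a suitable $\gamma \in (0,1)$ is a detail the paper leaves implicit, and is a welcome addition.
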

\begin{proof}
	By Lemma~\ref{thm:primal-dual-higher-order-termination-criteria-fulfilling}, for all $k \in \mathcal{K}$ sufficiently large, $w^p_{k+1} = w^{w^*\!, p}_{k+1}$ and by comparing~\eqref{eqn:primal-dual-termination} with~\eqref{eqn:primal-dual-extrapolation-termination}, we can see that $w^p_{k+1}$ will ultimately get accepted: $w_{k+2} = w^p_{k+1}$. By~\eqref{eqn:primal-dual-extrapolation-distance-optimality-upper-bound} and the convergence of~$\{w_{k+1}\}_{k \in \mathcal{K}}$, then also $\{w_{k+2}\}_{k \in \mathcal{K}}$ converges to~$w^*$. Inductively repeating this reasoning, it can be seen that the whole sequence of iterates~$\{w_k\}_{k \in \mathbb{N}}$ converges to~$w^*$ and that the extrapolation step is ultimately always accepted. Using~\eqref{eqn:primal-dual-extrapolation-distance-optimality-upper-bound}, it follows that
	\begin{equation*}
		[w_{k+2} - w^*]_j = \fun{O}(\mu_{k+1}) = \fun{O}(\mu_k^{\kappa}),
	\end{equation*}
	from which the R-convergence rate follows; more specifically, using~\eqref{eqn:primal-dual-extrapolation-distance-optimality} to argue about the rate of convergence for those components~$j$ such that $\bigl[\fun{\dot{w}^{w^*}}(0)\bigr]_j \neq 0$, we see that
	\begin{equation*}
		\frac{[w_{k+2} - w^*]_j}{\bigl([w_{k+1} - w^*]_j\bigr)^{\kappa}} = \fun{\Theta}\biggl(\frac{\mu_{k+1}}{\mu_k^{\kappa}}\biggr) = \fun{\Theta}\biggl(\frac{\mu_k^{\kappa}}{\mu_k^{\kappa}}\biggr) = \fun{\Theta}(1),
	\end{equation*}
	which finishes the proof.
\end{proof}

In the theorem above, the Q-convergence order is only established for those components of~$w$ for which the corresponding component in~$\fun{\dot{w}^{w^*}}(0)$ is nonzero, and it is a priori not clear that there always exist such components. Differentiating the equality
\begin{equation*}
	\fun{F^0}\bigl(\fun{x^{w^*}}(\mu), \fun{\lambda^{w^*}}(\mu)\bigr) = \begin{pmatrix} 0 & \mu e^T & 0 \end{pmatrix}^T
\end{equation*}
with respect to~$\mu$, we obtain among different equations
\begin{equation*}
	\begin{cases}
		0 = \fun{H}\bigl(\fun{x^{w^*}}(\mu), \fun{\lambda^{w^*}}(\mu)\bigr) \fun{\dot{x}^{w^*}}(\mu) - \fun{A}\bigl(\fun{x^{w^*}}(\mu)\bigr)^T \fun{\dot{\lambda}^{w^*}}(\mu); \\
		e = \bigl[\fun{\Lambda^{w^*}}(\mu)\bigr]_\mathcal{I} \fun{A_\mathcal{I}}\bigl(\fun{x^{w^*}}(\mu)\bigr) \fun{\dot{x}^{w^*}}(\mu) + \fun{C_\mathcal{I}}\bigl(\fun{x^{w^*}}(\mu)\bigr) \bigl[\fun{\dot{\lambda}^{w^*}}(\mu)\bigr]_\mathcal{I},
	\end{cases}
\end{equation*}
and only considering the components~$i \in I \cap \fun{\mathcal{A}}(x^*)$ of the bottom block that correspond to active inequality constraints evaluated for~$\mu = 0$,
\begin{equation*}
	\bigl[\fun{\lambda^{w^*}}(0)\bigr]_i \grad \fun{c_i}\bigl(\fun{x^{w^*}}(0)\bigr)^T \fun{\dot{x}^{w^*}}(0) = 1.
\end{equation*}
By strict complementarity, $\bigl[\fun{\lambda^{w^*}}(0)\bigr]_i \neq 0$, and we obtain
\begin{equation*}
	\grad \fun{c_i}\bigl(\fun{x^{w^*}}(0)\bigr)^T \fun{\dot{x}^{w^*}}(0) = 1 / \bigl[\fun{\lambda^{w^*}}(0)\bigr]_i,
\end{equation*}
from which we can conclude that $\fun{\dot{x}^{w^*}}(0) \neq 0$. Using the top block,
\begin{equation*}
	\fun{H}\bigl(\fun{x^{w^*}}(0), \fun{\lambda^{w^*}}(0)\bigr) \fun{\dot{x}^{w^*}}(0) = -\fun{A}\bigl(\fun{x^{w^*}}(0)\bigr)^T \fun{\dot{\lambda}^{w^*}}(0)
\end{equation*}
and since $\fun{H}\bigl(\fun{x^{w^*}}(0), \fun{\lambda^{w^*}}(0)\bigr)$ is nonsingular, also $\fun{\dot{\lambda}^{w^*}}(0) \neq 0$. Thus, as long as there is an active inequality constraint at a solution, there exists at least one component of the solution and a Lagrange multiplier vector for which Theorem~\ref{thm:extrapolation-convergence} establishes the Q-convergence order.

\section{Numerical experiments}
\label{ch:numerical-experiments}

Based on the acceleration framework outlined through Algorithm~\ref{alg:ipm}, results of numerical experiments on a proof-of-concept method to evaluate the performance will be discussed in this section. Covered by those tests are quadratic programming problems, as class of nonlinear problems for which the computations needed are of reduced complexity, as seen in Proposition~\ref{thm:w-hat-qp}.

Since Algorithm~\ref{alg:ipm} is an extrapolation framework in which the inner minimization is not specified, given that is asymptotically not needed by Theorem~\ref{thm:extrapolation-convergence}, the theoretical analysis applies to a wide range of practical algorithms. For the purpose of demonstrating the acceleration abilities, a practical variation on a baseline algorithm taking (partial) Newton steps in the inner minimization is studied. The algorithm is assumed to be given a starting point that is strictly feasible to the implicit constraints and uses outer and inner iterations. At each inner iteration, the $p$th-order extrapolation step is computed, as part of which the Newton step is obtained. To comply with the strict feasibility, both these steps are scaled-down if needed by the largest factor computed through a general formula such that the implicit constraints evaluate to at least the smallest strictly positive normal number in floating-point representation. After applying backtracking line search with the Armijo condition using the $2$-norm of the residual of perturbed optimality conditions as merit function on the possibly scaled Newton step, a comparison is made between the extrapolation step and the line-searched Newton step and the point at which the merit function evaluates to the smallest value gets chosen to start the next inner iteration.

Decreasing the barrier parameter at iteration~$k$ through $\mu_{k+1} = \min\{\mu_k^{\kappa},\mu_k/4\}$ and using $\lVert F^{\mu_k}(w_{k+1}) \rVert_{\infty} \leq \mu_k$ as inner termination criterion for a point~$w_{k+1}$, this algorithm can be seen to be practically compatible with Algorithm~\ref{alg:ipm} by setting~$\kappa$ to at most~$(p + 1)$. The algorithm has been implemented in the MATLAB platform for $p = 4$ and $\kappa = 4 + 1 = 5$, together with the unaccelerated baseline variant in which only Newton steps are taken for $\kappa = 1 + 1 = 2$ and the Mehrotra predictor--corrector algorithm. The Armijo line search is applied with parameter~$10^{-9}$. The stopping criteria used are those of the standard quadratic programming solver in MATLAB, which includes termination if no sufficient progress in the iterates is made, together with a timeout of 60 seconds.

Before passing problems to the algorithm, the problems are preprocessed. If lower and upper bounds are explicitly specified, these constraints are treated as general inequality constraints; if the lower bound equals the upper bound for a variable, the variable has a fixed value and the corresponding variable gets removed. To obtain a strictly feasible starting point, a linearly least squares solution to the equality constraints is first obtained using the normal equation; if the problem has no equality constraints, a primal solution with all components set to~$\epsilon = 0.4$ is used instead. For all inequality constraints that evaluate to a value strictly less than~$\epsilon$, shift variables are added to the formulation. The Lagrange multipliers to the equality constraints are set to~$1$ and the Lagrange multipliers to the inequality constraints are chosen such that the mean complementarity is~$5$.

The three algorithms have been applied to two sets of problems: the quadratic programming test set from~\cite{MM97} and randomly generated positivity-constrained problems. The first set consists of 138~problems of varying size, structure and density that have been collected from different sources. The randomly generated problems have positivity constraints on all variables and are generated with two parameters: the dimension~$n$ and conditioning~$t \geq 1$ of the problem. For a configuration with a given $n$~and~$t$, the objective function is set to~$x \mapsto \frac{1}{2} x^T H x + c^T x$ for~$H \in \mathbb{R}^{n \times n}$ a dense matrix with condition number~$t$ defined in terms of a random orthogonal matrix~$Q \in \mathbb{R}^{n \times n}$ generated through the procedure described in~\cite{Mez07}, a diagonal matrix~$T \in \mathbb{R}^{n \times n}$ with the diagonal components set to $\sqrt{t}$~and~$1/\sqrt{t}$ for the first and last and $\bigl(\sqrt{t}\bigr)^r$ otherwise for $r$ a realization of the uniform distribution on~$(-1, 1)$ through $H \triangleq Q T Q^T$ and a vector~$c \in \mathbb{R}^n$ whose components are realizations of the uniform distribution on~$(-1/2,1/2)$. The linear systems in the Jacobian of the residual of the perturbed optimality conditions at the current iteration point are solved using LU decomposition and, given the density of the problem descriptions, the coefficient matrix has been treated dense for the randomly generated problems while sparse for the other.

\begin{figure*}[t]
	\includegraphics[width=\columnwidth]{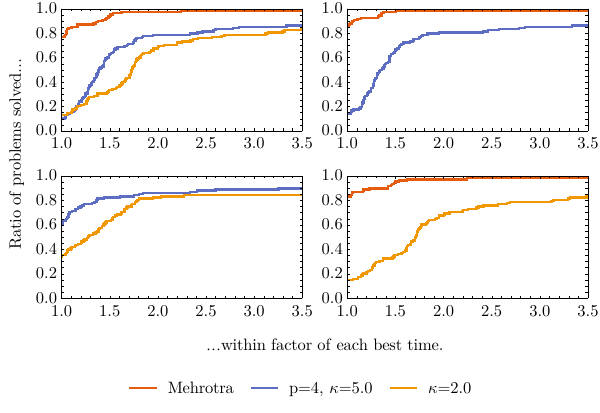}
	\caption{Performance profiles over 3~runs of solving the test set from~\cite{MM97} started at the solution output by the Mehrotra predictor--corrector algorithm upon the mean complementarity becoming smaller than~$1$, reporting only problems solved by at least one of the solvers.}
	\label{fig:qps-1-performance-profile}
\end{figure*}
In Figure~\ref{fig:qps-1-performance-profile}, a ranking between the different solvers is presented in the format of a performance profile as introduced in~\cite{DM02} based on the solution time for the diverse set of problems from~\cite{MM97}. To evaluate the performance of the extrapolation method as accelerator, the problems are initially solved by the Mehrotra predictor--corrector algorithm with the mean complementarity becoming smaller than~$1$ as termination criterion; the three solvers are then started at the output point and the recorded times concern this final phase. A timeout of 60~seconds is set for the initial solving and only problems for which a starting point could be obtained and that have been solved by at least one of the solvers are considered, which reduced the number of problems to~$108$. For the majority of the problems, the Mehrotra predictor--corrector solver continuing the initial phase outperforms the other two solvers. However, comparing the extrapolation solver to the baseline Newton solver, applying the extrapolation solver results on average in better solution times and the extrapolation step accelerates on average the baseline solver.

\begin{figure*}[t]
	\includegraphics[width=\columnwidth]{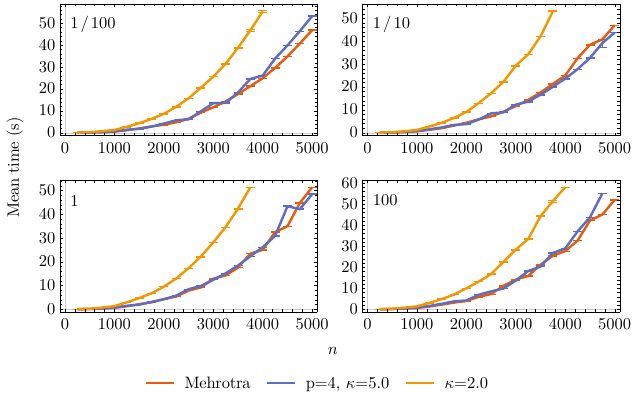}
	\caption{Mean times over 3~runs with error bars representing one standard deviation of solving randomly generated problems as described for $t$ equal to $n$ scaled with the number specified as plot label and $n$ of varying size.}
	\label{fig:random-0-condition-number-mean-time}
\end{figure*}
A comparison of solution time between the three solvers for the structured randomly generated positivity-constrained problems as global solver is presented in Figure~\ref{fig:random-0-condition-number-mean-time} for different problem sizes and conditionings that scale linearly with the problem size. For $t$ set to $n/100$,~$n/10$ or~$n$, the extrapolation solver seems to scale similar to the Mehrotra predictor--corrector solver, and is respectively slightly slower, slightly faster or comparable for larger problem sizes. Only for the relatively ill-conditioned problems with $t = 100 n$, the Mehrotra predictor--corrector solver seems to perform significantly better than the extrapolation solver. In all cases, the extrapolation solver outperforms the baseline solver. These observations suggests that for relatively well-conditioned problem, not only does the proof-of-concept solver accelerate the baseline solver, but it is also on a par with the Mehrotra predictor--corrector solver that performed well as global solver for the previous diverse test set.

\section{Discussion and future research}

We have shown how the concept of an extrapolation step in trajectory-following interior-point methods can be defined for a primal--dual method and how theoretically arbitrary fast convergence can be obtained by increasing the order of extrapolation. Of practical consideration, we note that the theoretical analysis assumes that the terms of the extrapolation step can be obtained with arbitrary precision: something that can not be satisfied for practical applications. As demonstrated for the case of quadratic programming, successive terms of the extrapolation step get computed by~\eqref{eqn:w-hat-qp} as solution of a linear system that depends on the previous terms; errors in the solution might therefore propagate to higher-order terms and the higher-order terms might be more sensitive to the quality of the solution of the linear systems. The quality of the extrapolation step might therefore deteriorate for problems with a higher condition number, as observed in the numerical experiments.

Theory for solving the linear systems arising in an interior-point method iteratively and inexactly has already been developed; see, e.g., \cite{Bel98} for the application on linear complementarity problems. In the light of the above, a study on the behavior for higher-order extrapolation methods could provide insight in a practically observable rate of convergence.

For up to second-order extrapolation, practical algorithms with complexity theory exists for extrapolation methods; see, e.g., \cite{ZZ95} for the application on linear complementarity problems. However, to the best of our knowledge, no such theory has been developed for the extrapolation of order higher than two, which could provide insight in the development of a practical global algorithm exploiting higher-order extrapolation for quadratic programming. For general nonlinear programming, initial findings on the performance have been reported in~\cite{EV24}, but no extensive study has been conducted.

\bibliography{ipm,references}

\def\cprime{$'$}
\begin{thebibliography}{GOST01}

\bibitem[BDM93]{BDM93}
A.~Benchakroun, J.-P. Dussault, and A.~Mansouri.
\newblock P{\'e}nalit{\'e} mixtes : un algorithme superlin{\'e}aire en deux
  {\'e}tapes.
\newblock {\em RAIRO-Oper. Res.}, 27, 353--374, 1993.

\bibitem[Bel98]{Bel98}
S.~Bellavia.
\newblock Inexact interior-point method.
\newblock {\em J. Optim. Theory Appl.}, 96, 109--121, January 1998.

\bibitem[Car05]{Car05}
C.~Cartis.
\newblock On the convergence of a primal-dual second-order corrector interior
  point algorithm for linear programming.
\newblock Technical Report 05/04, Numerical Analysis Group, Computing
  Laboratory, Oxford University, United Kingdom, March 2005.

\bibitem[Car09]{Car09}
C.~Cartis.
\newblock Some disadvantages of a {Mehrotra}-type primal-dual corrector
  interior point algorithm for linear programming.
\newblock {\em Appl. Numer. Math.}, 59, 1110--1119, 2009.

\bibitem[CG08]{CG08}
M.~Colombo and J.~Gondzio.
\newblock Further development of multiple centrality correctors for interior
  point methods.
\newblock {\em Comput. Optim. Appl.}, 41, 277--305, 2008.

\bibitem[DM02]{DM02}
E.~D. Dolan and J.~J. Mor{\'e}.
\newblock Benchmarking optimization software with performance profiles.
\newblock {\em Math. Program.}, 91, 201--213, January 2002.

\bibitem[Dus05]{Dus05}
J.-P. Dussault.
\newblock High-order {Newton}-penalty algorithms.
\newblock {\em J. Comput. Appl. Math.}, 182, 117--113, 2005.

\bibitem[Dus10]{Dus10}
J.-P. Dussault.
\newblock On the asymptotic order in path following interior point methods.
\newblock
  \url{http://www.dem.ist.utl.pt/engopt2010/Book_and_CD/Papers_CD_Final_Version/pdf/03/01485-01.pdf},
  June 2010.
\newblock 2\textsuperscript{nd} International Conference on Engineering
  Optimization, September 6--9, 2010, Lisbon, Portugal.

\bibitem[EV22]{EV22}
T.~A. Espaas and V.~S. Vassiliadis.
\newblock An interior point framework employing higher-order derivatives of
  central path-like trajectories: Application to convex quadratic programming.
\newblock {\em Comput. Chem. Eng.}, 158(106738), 2022.

\bibitem[EV24]{EV24}
T.~A. Espaas and V.~S. Vassiliadis.
\newblock Higher-order interior point methods for convex nonlinear programming.
\newblock {\em Comput. Chem. Eng.}, 180(108475), 2024.

\bibitem[FGW02]{FGW02}
A.~Forsgren, P.~E. Gill, and M.~H. Wright.
\newblock Interior methods for nonlinear optimization.
\newblock {\em SIAM Rev.}, 44(4), 525--597 (electronic) (2003), 2002.

\bibitem[FM68]{FM68}
A.~V. Fiacco and G.~P. McCormick.
\newblock {\em Nonlinear Programming: Sequential Unconstrained Minimization
  Techniques}.
\newblock John Wiley and Sons, Inc., New York, 1968.
\newblock Republished by Society for Industrial and Applied Mathematics,
  Philadelphia, 1990. {ISBN} 0-89871-254-8.

\bibitem[GGK05]{CGGK05}
J.~C. Gilbert, C.~G. Gonzaga, and E.~Karas.
\newblock Examples of ill-behaved central paths in convex optimization.
\newblock {\em Math. Program.}, 103, 63--94, 2005.

\bibitem[GOST01]{GOST01}
N.~M. Gould, D.~Orban, A.~Sartenaer, and {\relax Ph}.~L. Toint.
\newblock Superlinear convergence of primal-dual interior point algorithms for
  nonlinear programming.
\newblock {\em SIAM J. Optim.}, 11, 974--1002, 2001.

\bibitem[Meh91]{Meh91a}
S.~Mehrotra.
\newblock On finding a vertex solution using interior point methods.
\newblock {\em Linear Algebra Appl.}, 152, 233--253, 1991.

\bibitem[Mez07]{Mez07}
F.~Mezzadri.
\newblock How to generate random matrices from the classical compact groups.
\newblock {\em Not. Am. Math. Soc.}, 54, 592--604, May 2007.

\bibitem[MM97]{MM97}
I.~Maros and C.~M{\'e}sz{\'a}ros.
\newblock A repository of convex quadratic programming problems.
\newblock Technical Report 97/6, Department of Computing, Imperial College,
  London, United Kingdom, July 1997.

\bibitem[WJ99]{WJ99}
S.~Wright and F.~Jarre.
\newblock The role of linear objective functions in barrier methods.
\newblock {\em Math. Program.}, 84(2, Ser. A), 357--373, 1999.

\bibitem[WZ96]{WZ96}
S.~Wright and Y.~Zhang.
\newblock A superquadratic infeasible-interior-point method for linear
  complementarity problems.
\newblock {\em Math. Program.}, 73, 269--289, 1996.

\bibitem[ZZ95]{ZZ95}
Y.~Zhang and D.~Zhang.
\newblock On polynomiality of the {Mehrotra}-type predictor--corrector
  interior-point algorithms.
\newblock {\em Math. Program.}, 68, 303--318, 1995.

\end{thebibliography}

\end{document}